\documentclass[12pt,reqno]{amsart}
\usepackage[innermargin=1.4in,outermargin=1.4in,bottom=1.5in,marginparwidth=1.1in,marginparsep=3mm]{geometry}
\usepackage{amsfonts}
 \usepackage{enumerate}

\usepackage{amsmath,amssymb,amsthm,bm}
\usepackage{mathtools}
\usepackage{shuffle}
\usepackage{enumitem}
\usepackage{amsrefs}
\usepackage{empheq}
\usepackage{wasysym}
 \usepackage{mathscinet}
\usepackage{verbatim}
\usepackage{graphicx}
\usepackage[
bookmarks=true,         
bookmarksnumbered=true, 
colorlinks=true, pdfstartview=FitV, linkcolor=blue, citecolor=blue,
urlcolor=blue]{hyperref}
\usepackage{microtype}
\usepackage{geometry}                
\geometry{letterpaper}                   
\usepackage{graphicx}
\usepackage{amssymb}
\usepackage{epstopdf}
\usepackage{amssymb}
\usepackage{amsmath}
\usepackage{amsfonts}
\usepackage{graphicx}
\usepackage{subfigure}
\usepackage{booktabs}
\usepackage{color}
\usepackage{array}
\usepackage{mathrsfs}
\usepackage{multirow}
\usepackage{float}
\usepackage{fancyhdr}

\setcounter{MaxMatrixCols}{10}
\theoremstyle{plain}
\newtheorem{theorem}{Theorem}[section]

\newtheorem{corollary}[theorem]{Corollary}

\newtheorem{lemma}[theorem]{Lemma}

\newtheorem{proposition}[theorem]{Proposition}

\theoremstyle{remark}

\numberwithin{equation}{section}
\numberwithin{Assu}{section}
\numberwithin{Lem}{section}
\numberwithin{Defi}{section}
\numberwithin{Theo}{section}
\numberwithin{Pro}{section}
\numberwithin{Rem}{section}
\numberwithin{Coro}{section}
\numberwithin{Fig}{section}
\numberwithin{Exe}{section}

\theoremstyle{definition}

\newtheorem{remark}[theorem]{Remark} 

\newcommand{\EE}{\mathbb{E}}

\def\blue{\color{blue}}

\newcommand{\cF}{\mathcal{F}}

\newcommand{\cL}{\mathcal{L}}
\newcommand{\cM}{\mathcal{M}}

\newcommand{\cP}{\mathcal{P}}

\newcommand{\RR}{\mathbb{R} }

\newcommand{\al}{\alpha}

\newcommand{\ep}{\varepsilon}

\newcommand{\om}{\omega}

\newcommand{\si}{\sigma}

  
\let\Section=\section
\def\section{\setcounter{equation}{0}\Section}

\def\RR{\mathbb{R} }

\def\EE{\mathbb{E}}

\def\cF{{\mathcal F}}

\def\si{{\sigma}}

\def\cL{{\mathcal L}}

\def \RR{\mathbb R}

\def\EE{\mathbb E\ }

\def\cF{{\mathcal F}}

\def\si{{\sigma}}

\def\cL{{\mathcal L}}

\def\al{{\alpha}}

\def\si{{\sigma}}


\def\PP{{\mathbb P}}

\def\al{\alpha}

\def\Ad2{{\| A-\widetilde A \|_{L^{2}_{loc}} }}
\def\om{{\omega}}

\def\cP{{\mathcal P}}

\usepackage{amsfonts}
\usepackage{amsmath}
\usepackage{mathrsfs}
\usepackage{amssymb}
\usepackage{color,amscd,array,graphicx,mathdots,bm,amsfonts,epsfig,amsmath}
\usepackage{amsthm}
\usepackage{hyperref}
\usepackage{enumerate}
\usepackage{geometry}
\usepackage{yhmath}
\setcounter{MaxMatrixCols}{30}
\marginparwidth=45pt \topmargin=0.3in \textwidth=5.9in
\oddsidemargin=0.2in \evensidemargin=0.2in \baselineskip=30pt
\parskip=0.1in
\pagenumbering{arabic}
\addtolength{\voffset}{0cm} \pagestyle{myheadings}

\newcommand{\poincare}{Poincar\'{e}}

\makeatletter
\newcommand{\figcaption}{\def\@captype{figure}\caption}
\newcommand{\tabcaption}{\def\@captype{table}\caption}
\makeatother

\def\al{{\alpha}}
 
\def\om{{\omega}}\def\si{{\sigma}}

\newfam\msbmfam\font\tenmsbm=msbm10\textfont
\msbmfam=\tenmsbm\font\sevenmsbm=msbm7
\scriptfont\msbmfam=\sevenmsbm
\def\BB{\mathbb B}
\def\EE{\mathbb E}\def\PP{\mathbb P}
\def\RR{\mathbb R}
 
\def\<{\left<}\def\>{\right>}

\def\({\left(}\def\){\right)}

\numberwithin{equation}{section}

\begin{document}
\title[quantile-dependent McKean-Vlasov diffusions]
{Nonlinear McKean-Vlasov diffusions under the weak H\"{o}rmander condition with quantile-dependent coefficients}
\author{Yaozhong Hu}
\address{Department of Mathematical and Statistical Sciences, University of Alberta, Edmonton, AB T6G 2G1, Canada}
\email{yaozhong@ualberta.ca}
\thanks{Supported by NSERC Discovery grants and a startup fund from University of Alberta at Edmonton.}

\author{Michael A. Kouritzin}
\address{Department of Mathematical and Statistical Sciences, University of Alberta, Edmonton, AB T6G 2G1, Canada}
\email{michaelk@ualberta.ca}

\author{Jiayu Zheng}
\address{Department of Mathematical and Statistical Sciences, University of Alberta, Edmonton, AB T6G 2G1, Canada}
\email{jiayu8@ualberta.ca}
\thanks{Supported by National Natural Science Foundation of China grant 11901598. }

\date{}
\maketitle
\begin{abstract}
In this paper, the strong existence and  uniqueness for a degenerate finite system of quantile-dependent McKean-Vlasov stochastic differential equations are obtained under a weak H\"{o}rmander condition. The approach relies on the apriori    bounds for the density of the solution to time inhomogeneous diffusions. The time inhomogeneous Feynman-Fac formula is used to construct a contraction map for this degenerate system.
\bigskip
\end{abstract}

\noindent{\bf Keywords:} Mckean-Vlasov equation, quantile, Langevin equation, 
weak H\"{o}rmander condition, Feynman-Kac formula, two-sided Gaussian estimates, quantile-dependent PDE.

\section{Introduction}\label{s.1} 
Stochastic differential equations (SDEs)  with coefficients depending on the probability distribution of the unknown  have become 
a hot research area in recent years.  
One  particular topic is the  so-called mean field $d$-dimensional stochastic differential equations (see e.g. \cites{buckdahn, carmona1, carmona2} and references therein):
$
dX_t=F (t , \mathcal{L}(X_t),   X_t  ) dt+\sigma (t,    \mathcal{L}(X_t),   X_t )dW_t,
$
where $(W_t)_{t\geq 0}$ is a $d$-dimensional Brownian motion, and where $\mathcal{L}(X_t)\in \cP(\RR^d)$ is the probability law of
the unknown   $X_t$.   
To guarantee the existence and uniqueness of the solution, researchers often
assume that $F$ and $\si$ are  Lipschitzian  on $\cP(\RR^d) \times \RR^d$  with respect to  
a Wasserstein metric on the space $\cP(\RR^d)$ of probability measures.    
However, this condition is sometimes hard to verify. For example, in finance and other applications (e.g. \cite{crisan2014conditional}), the following quantile-dependent equation is  considered:  
\begin{align}\label{sde0}
dX_t=F (t, Q_{\alpha}(X_t),X_t)dt+\sigma (t, Q_{\alpha}(X_t),X_t)dW_t,
\end{align}
where  $F:\RR_{+}\times\RR^d\times \RR^d\to \RR^d$ and $\sigma : \RR_{+}\times\RR^d\times \RR^d\to \RR^d\otimes \RR^d$ are continuous functions, $\alpha=(\alpha_1,\dots,\alpha_d) \in (0, 1)^d$,  
and $Q_{\alpha}(X_t)$ is the $\alpha$-quantile (vector)  of the probability measure $\cL(X_t)$ of $X_t$, namely, 
\begin{align*}
\left(Q_{\alpha}(X_t)\right)_j=\left(Q_{\alpha}(\cL(X_t))\right)_j=\inf\Big\{y_j\in \RR,\int_{x\in\RR^d, x_j\leq y_j}\cL(X_t)(dx)\ge \al_j \Big\},\quad j=1,\dots, d\,. 
\end{align*} 
It is well-known that for any two real valued random variables
$X$ and $Y$ with cumulative distributions $F_X$ and $F_Y$, the 
p-Wasserstein  distance is given by
\[
I_p(X, Y)=\left(\int_0^1 |F_X^{-1}(\al)-F_Y^{-1}(\al)|^p d\al\right)^{1/p}\,.
\]
From the above expression it is obvious that the coefficients in \eqref{sde0} 
are not continuous with respect to the 
Wasserstein  distance for any finite $p\ge 1$.   Hence, we need a completely different approach to study the quantile-dependent equations. 
 
The works \cite{crisan2014conditional}  and 
\cite{kolokoltsov2013nonlinear} are among  the first to study  this type of equations.  \cite{crisan2014conditional}  motivated such a model \eqref{sde0} from a financial viewpoint, and proved the existence of a solution, but left uniqueness as an open problem.
\cite{kolokoltsov2013nonlinear} then established the uniqueness of \eqref{sde0} under differentiable and Lipschitzian conditions on $\sigma$ and $F$, and under the uniform ellipticity condition on  $a:=\sigma\sigma^*$, namely, there exists constant $\Lambda>0$, such that
\begin{align}\label{unelp}
\Lambda^{-1}| \xi|^2  \leq |\xi,  a(t,y,x) \xi | \leq \Lambda |\xi|^2,\quad \forall\,  (t,y,x)\in \RR_+\times \RR^d\times \RR^d, \xi\in \RR^d\,, 
\end{align}
the well-posedness of equation \eqref{sde0} was obtained.

Our main contribution of this paper is to prove the pathwise uniqueness for the quantile-dependent SDE under the  weak H\"{o}rmander condition (see e.g. \cite[Page 355]{hu2016analysis}), which is a much different condition than the uniform ellipticity condition \eqref{unelp}. Moreover, many SDEs including some used in financial markets fail to satisfy the uniform ellipticity condition so alternative conditions like ours, which require substantially different proofs, are important. To control the quantile when proving the uniqueness,  we require that the solution $X_t$ (as random vector)  has density (with respect to Lebesgue measure)  and this density is strictly positive with a certain decay property. This problem of existence of density is an important topic in probability theory and partial differential equations. The weak H\"{o}rmander condition imposed in the McKean-Vlasov equation ensures the existence of the density of its solution. Let us recall one such result on the following $nd$-dimensional Langevin-type stochastic differential equation: 
\begin{align}\label{chaineq}
\begin{cases}
&dX^{1}_t = F_1\(t, Q_{\alpha}(X_t), X^{1}_t, \cdots, X^{n}_t\)dt + \sigma\(t, Q_{\alpha}(X_t), X^{(1)}_t, \cdots, X_t^{(n)}\)dW_t,  \\
&dX^{2}_t = F_2\(t, Q_{\alpha}(X_t), X^{1}_t, \cdots, X_t^{n}\)dt, \\
&dX^{3}_t = F_3\(t, Q_{\alpha}(X_t), X^{2}_t, \cdots, X_t^{n}\)dt,   \\
&\qquad \vdots   \\
&dX^{n}_t = F_n\(t, Q_{\alpha}(X_t), X^{{n-1}}_t, X_t^{n}\)dt,
\end{cases}
\end{align}
where $d$ and $n$ are  positive integers;    $(W_t)_{t\ge 0}$ is a standard $d$-dimensional Brownian motion;   $X^{(i)},1\leq i \leq n,$ are all $d$-dimensional processes, 
and $(X_t)_{t\geq 0}=(X_t^{1},\dots,X_t^{n})_{t\geq 0}$;    $F_1: \RR_+\times \RR^{nd}\times \RR^{nd}\to \RR^d$;  $F_i:\RR_+\times \RR^{nd}\times \RR^{(n-i+2)d}\to \RR^d$ for $i=2, \cdots, n$;  and $\sigma:\RR_+\times \RR^{nd}\times \RR^{nd}\to \RR^d\otimes \RR^d$ are continuous functions.  
  Denote by $I_d$ and $0_d$ the $d\times d$ identity and zero matrices  respectively.  
Introducing $D=(I_d, 0_d, \cdots, 0_d)^T :  \in \RR^{nd\times d}$, and letting  $F=(F_1,\cdots,F_n)^T$, we can rewrite \eqref{chaineq} in the following abbreviated form
\begin{align}\label{simeq}
dX_t = F(t, Q_{\alpha}(X_t), X_t)dt + D\sigma(t, Q_{\alpha}(X_t), X_t)dW_t.
\end{align}
The system of equations \eqref{chaineq}  (or \eqref{simeq}) is highly degenerate if $n\geq 2$ and   
the elliptic condition \eqref{unelp}  
is obviously not satisfied. 
Still, in the special case that 
  $F$ and $\sigma$ in \eqref{simeq} are independent of the quantile, 
namely, when  \eqref{simeq}  is reduced  to 
\begin{align}\label{chain}
dX_t = \bar{F}(t, X_t)dt +D\bar{\sigma}(t, X_t)dW_t\,, 
\end{align} 
the  existence of the density, its derivatives and  its   two-sided Gaussian  bounds    have been  obtained  in
 \cite{de2018strong,  delarue2010density, menozzi2011parametrix, pigato2020density},  which are critical to this  work. 

The  degenerate stochastic differential equations  of the 
form \eqref{chain}  have been attracted more and more attention in the past  years 
(see e.g. \cite{priola2015weak, veretennikov2020weak, wang2016degenerate}).
When a Newton equation $\ddot x(t)=F(t, x(t), \dot x(t))$  is under influence of some uncertainty, the corresponding  stochastic differential equation could  be 
$\ddot x(t)=F(t, x(t), \dot x(t))+ G (t, x(t), \dot x(t))\dot W(t)$.  
This equation is of the form \eqref{chain} if we let $x_1(t)=x(t)$ and $x_2
(t)=\dot x(t)$, namely, 
$d x_1(t)=F(t, x_1(t),   x_2(t))+ G (t, x_1(t),   x_2(t))d W(t)$ and
$dx_2(t) =x_1(t) dt$. 
%
There are also many other examples.  Equation \eqref{chain} also 
corresponds to the dynamics of a finite-dimensional non-linear Hamiltonian system (a chain of anharmonic oscillators) coupled with two heat reservoirs at different temperatures, which was used by Eckmann et al. \cite{eckmann1999non} (see also \cite{herau2004isotropic, soize1994fokker, talay2002stochastic}) to study the statistical mechanics of such system. Rey-Bellet and Thomas \cite{rey2000asymptotic} considered the low temperature asymptotic behavior of the invariant measure in the framework of \eqref{chain}. Additionally, there are some applications of the Langevin-type equation in pricing Asian options (see e.g. \cite{barucci2001some}).

To obtain the existence and uniqueness of equation \eqref{simeq}, we use the fixed point theorem. 
But, to apply the fixed point theorem, we need to bound a certain  distance 
between $Q_\al(h_1)$ and $Q_\al(h_2)$  by 
 a certain distance of $h_1$ and $h_2$ {\blue (see \eqref{ql1})}. 
This was already done in \cite{kolokoltsov2013nonlinear}. 
We also need to bound the distance between 
 $u^{(1)}$ and $u^{(2)}$ by the distance between $\om^{(1)}$ and 
 $\om^{(2)}$, where each $u^{(i)}$  ($i=1, 2$) is the density of $X_t^{(i)}$
in \eqref{chaineq}
when $Q_\al( X_t^{(i)})$ 
 is replaced by $\om^{(i)}$ { (see Proposition \ref{sensiofu})}.  
This is relatively complicate and requires the fact that the density $u$ of the solution $X_t$  of 
 \eqref{simeq} is characterized by the corresponding    Fokker-Planck equation.   
Thus, the above problem  of controlling the distance between
 $u^{(1)}$ and $u^{(2)}$ by the distance between $\om^{(1)}$ and 
 $\om^{(2)}$ is reduced to how  the solution 
of the corresponding Fokker-Planck equation depends on the coefficients. 
However, because of the degeneracy,  it is hard to use the PDE approach 
as in \cite{kolokoltsov2013nonlinear}.   
Instead, we shall use the time-dependent Feynman-Kac formula.

The paper is organized as follows. In Section \ref{s.2}, we present the main hypotheses and main results of this paper as well as some notation. 
Some useful apriori estimates on the density including the tail estimates and lower bounds are given in Section \ref{s.3}.  
 These two-sided bounds   and the Feynman-Kac formula play a central role in the whole article. We present in Section \ref{s.4} the proof of our main results. 

\section{Main results}\label{s.2}
For any $x\in \RR^{nd}, $ we write  $x = (x_1, \cdots, x_n)=(x_1^1,\cdots,x_1^d; \cdots; x_n^1,\cdots,x_n^d)$, where for $i=1,\cdots,n$, $j=1,\cdots,d$, $x_i\in\RR^d$, $x_i^j\in\RR$. 
Let $|x_i|$ denote the Euclidean norm of $x_i$, that is, $|x_i|=
\left( \sum_{ j=1}^{d}|x_i^j|^2 \right)^{\frac{1}{2}}$. Let $F_i: \RR_+\times \RR^{nd} \times 
\RR^{((n-i+2)\wedge n)d} \rightarrow \RR^d$ be continuous mappings.  
For notational simplicity, we may consider $F_i$ as a 
continuous mapping from $\RR_+\times \RR^{nd} \times 
\RR^{nd} $ to $ \RR^d$ and write $F=(F_1\cdots,F_n)=(F_1^1,\cdots,F_1^d; \cdots; F_n^1,\cdots,F_n^d)$ as well.

For any $d \times d$ matrix $a=(a_{ij})_{i,j=1}^d$, denote  by  $|a|= (\sum_{i,j=1}^{d}|a_{ij}|^2  )^{\frac{1}{2}}$  its Hilbert-Schmidt norm. 
In what follows, we use $\|\cdot \|_p$ for the $L^{p}$ norm on the probability space
$(\Omega, \cF, \PP)$. For any measurable function $f$ on a  Euclidean space, $|f|_{L^p}$ denotes the $L^p$ norm of $|f|$ with respect to the Lebesgue measure.

The notation $\nabla$ stands for the gradient with respect to all 
space variables. Let $f \in C(\RR^+ \times \RR^{nd} \times \RR^{nk}, \RR^d)$, $k =1, \cdots, n$. $\nabla_{x_i} f(t, y, x)$ denotes the gradient 
operator   w.r.t. the $i$th  
space variable $x_i \in \RR^d$, which is a $d \times d$ Jacobian matrix.
%


Fix a time horizon $[0,T]$. We will need  the following  
hypotheses  
for coefficients $F$ and $\sigma$ and the initial condition $X_0$. 
\begin{enumerate}
	\item[({\bf H1})] $F$ is uniformly bounded at the origin of the third argument. That is, there exists a positive constant $\kappa$ such that
	\[
	  \sup_{t\in [0,T],y\in \RR^{nd}}|F (t,y, 0)|\leq \kappa <\infty.
	\] 
    \item [({\bf H2})] The function $a:=\sigma\sigma^*$ is uniformly elliptic,  namely,    \eqref{unelp} is satisfied. 

	\item[({\bf H3})] $F$ and $\sigma$ are uniformly Lipschitz continuous in space variables with constant $\kappa>0$, i.e., for all $y, \bar{y}, x, \bar{x} \in \RR^{nd}$,
	\begin{align}
	\sup_{t\in [0,T]}\big(|F(t,y, x) - F(t, \bar{y}, \bar{x})|+ |\sigma(t,y, x) - \sigma(t, \bar{y}, \bar{x})|\big)\le \kappa \(|x - \bar{x}| + |y - \bar{y}|\).
\end{align}


	\item[({\bf H4})] 
	The function $x \mapsto  F(t,y,x)$ is twice differentiable and function $x_1  \mapsto a(t,y,x_1,\dots, x_n) $ is three times differentiable. Moreover, 
 the following inequalities hold
	\begin{align*}
\sup_{(t,y,x)\in [0,T]\times \RR^{nd}\times \RR^{nd}}\bigg|\sum_{j,k=1}^d\frac{\partial^2}{\partial_{x^j_{1}}\partial_{x^k_{1}}}a_{jk}(t,y,x)\bigg|\leq \kappa,
	\end{align*}
	\begin{align*}
	\sup_{t\in [0,T]} \sum_{j,k=1}^d\bigg|\frac{\partial^2}{\partial_{x^k_{1}}\partial_{x^j_{1}}}a_{kj}(t,y,x) - \frac{\partial^2}{\partial_{x^k_{1}}\partial_{x^j_{1}}}a_{kj}(t,\bar{y},\bar{x})\bigg| \le \kappa\(|x - \bar{x}| + |y - \bar{y}|\),
	\end{align*}
	and
	\begin{align*}
	\sup_{t\in [0,T]} \sum_{i=1}^n\sum_{j=1}^d\bigg| \frac{\partial}{\partial x^j_{i}}F^j_{i}(t,y, x) - \frac{\partial}{\partial x^j_{i}}F^j_{i}(t,\bar{y},\bar{x}) \bigg| \le 
	\kappa\(|x - \bar{x}|+|y - \bar{y}| \),
	\end{align*}
	for all $x,\bar{x},y,\bar{y}\in \RR^{nd}$.

	\item[({\bf H5})]  For any integer $i= 2,\dots, n$, the derivative $\nabla_{x_{i-1}}F_i(t,y,x)$ is $\eta$-H\"older continuous in the first spatial variable $x_{i-1}$ with constant $\kappa$, and there exists a closed convex subset $\varepsilon_{i-1}$ contained in the set of invertible $d \times d$ matrices, such that for all $t \in [0,T]$ and $(x_{i-1}, \cdots,x_n) \in \RR^{(n-i+2)d}$, the matrix $\nabla_{x_{i-1}}F_i(t, y,x_{i-1}, \cdots, x_n)$ belongs to $\varepsilon_{i-1}$. 
\item[({\bf I})]\label{init} $X_0$ is a random variable independent of $W$. The probability law of $X_0$ has a continuously differentiable density  $f>0$ satisfying the following integrability condition
\begin{align}\label{hypi}
	U=&\int_0^{\infty} \sup_{|z|\geq r}|f(z)|^2(r^{4n-1+\ep}+r^{n-1})dr \nonumber\\
	&\quad  +\int_0^{\infty} \Big[\sup_{|z|\geq \lambda}|\nabla f(z)|^4\Big]\(\lambda^{4nd-1+\ep} +\lambda^{nd-1}\) d\lambda<\infty,
	\end{align}
for some constant $\ep>0$.  
\end{enumerate}

\begin{remark}
	The most important hypotheses in this work 
	 are the hypotheses (H2) and (H5): the matrices $(\nabla_{x_{i-1}}F_i)_{2 \le i \le n}$ have full rank, which imply  a version of the (weak) H\"{o}rmander condition. It ensures the existence of the probability density of the solution to \eqref{simeq}. Let us point out that in (H2) we assume that $a=\sigma \sigma^*$ is uniformly elliptic.   However,  the 
diffusion coefficient	 $D\sigma\sigma^* D^*$  of   the whole system \eqref{simeq} 
is highly degenerate. 
\end{remark}

\begin{remark}
Hypotheses (H3), (H4) 
are  to guarantee the Lipschitz continuity of the  function $c$ defined in \eqref{termc} below. In addition, they imply that $c$ is bounded, which is needed  in the application of the Feynman-Kac formula (see Theorem \ref{thmfk}).
\end{remark}
%
 
\begin{remark}
	At the first look, hypothesis  (I) seems a little complicated. However, Gaussian densities and many other functions satisfy this condition. 
Furthermore, it is worth mentioning that to prove Proposition \ref{sensiofu} (i.e. 
	the local existence and uniqueness), 
	hypothesis (I) can be weakened  to  the following form: 
	\begin{align*}
	\int_{\RR^{nd} } f(y)^2 \(|y|^{{nd}+\ep} +1\) dy +\int_0^{\infty} \Big[\sup_{|z|\geq \lambda}|\nabla f(z)|^4\Big]\(\lambda^{4nd-1+\ep} +\lambda^{nd-1}\) d\lambda<\infty.
	\end{align*}
The   condition \eqref{hypi} is  used  to guarantee   the  global  existence 
and uniqueness of    \eqref{simeq}. 
\end{remark}

In the next theorem, we provide the existence and uniqueness result for equation \eqref{simeq}, which  is the main result of this paper.
\begin{theorem}\label{SDEsu}
	Assume   hypotheses  (H1)-(H5)  
and hypothesis (I).
Then,  there exists a unique  strong solution to SDE \eqref{simeq} on $[0,T]$.
\end{theorem}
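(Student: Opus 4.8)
The plan is to reduce \eqref{simeq} to a fixed-point problem for the quantile curve. Given a continuous curve $\omega:[0,T]\to\RR^{nd}$, freeze the quantile and consider the time-inhomogeneous (classical) SDE
\[
dX^\omega_t = F(t,\omega_t,X^\omega_t)\,dt + D\sigma(t,\omega_t,X^\omega_t)\,dW_t,\qquad X^\omega_0=X_0 .
\]
By (H1) and (H3) the coefficients $x\mapsto F(t,\omega_t,x)$, $x\mapsto\sigma(t,\omega_t,x)$ are globally Lipschitz and of linear growth, uniformly in $t$, so this equation has a unique strong solution; by (H2), (H4), (H5) it is a degenerate Langevin system of the form \eqref{chain}, so the density results recalled in Section \ref{s.3} apply and $X^\omega_t$ has a density $u^\omega(t,\cdot)>0$ obeying the two-sided Gaussian bounds and the tail and gradient estimates there. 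Set $\Phi(\omega)_t:=Q_\alpha\big(\cL(X^\omega_t)\big)=Q_\alpha\big(u^\omega(t,\cdot)\big)$. A fixed point $\omega^{\ast}$ of $\Phi$ makes $X^{\omega^{\ast}}$ a strong solution of \eqref{simeq}; conversely the quantile curve of any solution of \eqref{simeq} is a fixed point of $\Phi$ (and is continuous by the density estimates). Hence it suffices to show $\Phi$ has a unique fixed point.

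Three analytic ingredients enter. First, Lipschitz dependence of the quantile on the density: combining the lower Gaussian bound with the tail estimate of Section \ref{s.3} --- the latter localizing the quantile to a bounded region in terms of the moments controlled by hypothesis (I) --- one obtains, as in \cite{kolokoltsov2013nonlinear} (cf. \eqref{ql1}),
\[
\big|Q_\alpha\big(u^{\omega^{(1)}}(t,\cdot)\big)-Q_\alpha\big(u^{\omega^{(2)}}(t,\cdot)\big)\big|\le C\,\big|u^{\omega^{(1)}}(t,\cdot)-u^{\omega^{(2)}}(t,\cdot)\big|_{L^1}.
\]
Second, Lipschitz dependence of the density on the frozen curve, which is Proposition \ref{sensiofu}. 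Since $u^\omega$ solves the associated Fokker--Planck equation, the time-inhomogeneous Feynman--Kac formula (Theorem \ref{thmfk}) represents $u^\omega(t,\cdot)$ as an expectation of $f$ carried along an auxiliary diffusion and weighted by $\exp\!\big(\int_0^t c(s,\omega_s,\cdot)\,ds\big)$, where $c$ is a bounded zeroth-order coefficient built from $F$, $a$ and their derivatives and Lipschitz in $\omega$ by (H3)--(H4). Subtracting the representations for $\omega^{(1)}$ and $\omega^{(2)}$, using the Lipschitz dependence on $\omega$ of the drift, diffusion and $c$, the Gaussian bounds to control the kernels, and Gronwall, gives $\sup_{s\le t}\big|u^{\omega^{(1)}}(s,\cdot)-u^{\omega^{(2)}}(s,\cdot)\big|_{L^1}\le C(t)\sup_{s\le t}|\omega^{(1)}_s-\omega^{(2)}_s|$ with $C(t)\downarrow 0$ as $t\downarrow 0$. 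Third, the same Gaussian and tail estimates show that $\Phi$ maps the set of continuous curves with $\omega_0=Q_\alpha(\cL(X_0))$, $\sup_t|\omega_t|\le M$ and a fixed modulus of continuity --- a complete metric space under the uniform norm --- into itself for a suitable $M$ and short time horizon $T_0$.

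Combining these, $\Phi$ is a contraction on that set for $T_0$ small, and the Banach fixed-point theorem yields a unique strong solution of \eqref{simeq} on $[0,T_0]$. To reach $[0,T]$ one patches: restart from $X^{\omega^{\ast}}_{T_0}$, whose density still satisfies the integrability condition \eqref{hypi} --- this is precisely why the stronger form of (I) (rather than the weaker one in the remark) is required, ensuring $M$ and $T_0$ can be kept uniform along the iteration --- and repeat finitely many times to cover $[0,T]$; uniqueness on each piece gives global uniqueness. The main obstacle is Proposition \ref{sensiofu}: because the diffusion matrix $D\sigma\sigma^{\ast}D^{\ast}$ is degenerate, the standard parabolic (energy or Schauder) estimates for the Fokker--Planck equation are unavailable, so one must work through the probabilistic Feynman--Kac representation together with the delicate two-sided Gaussian bounds for \eqref{chain} of \cite{de2018strong,delarue2010density,menozzi2011parametrix,pigato2020density}, carefully tracking their dependence --- and that of the weight $c$ --- on $\omega$, and keeping all constants uniform in time so that the local step iterates to a global solution.
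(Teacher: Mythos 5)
Your proposal is correct and follows essentially the same route as the paper: freeze the quantile curve, use the Feynman--Kac representation and the two-sided Gaussian/tail/lower bounds to prove the quantile-to-density Lipschitz estimate (Lemma \ref{lmmaqt}) and the density-to-curve Lipschitz estimate (Proposition \ref{sensiofu}), run a Banach fixed-point argument on a short interval, extend globally via the uniform bound of Proposition \ref{hypie}, and finally upgrade to strong existence and uniqueness by substituting the unique quantile curve into the frozen equation and invoking Theorem \ref{strongsol}. The only cosmetic difference is that the paper sets up the contraction on pairs $(\omega,u^{\omega})$ rather than on the quantile curve alone, which is an equivalent formulation.
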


The idea to prove Theorem \ref{SDEsu} follows the spirit of 
\cite{kolokoltsov2013nonlinear} to construct a contraction mapping associated to equation \eqref{simeq}. To this end, we need to introduce an auxiliary equation. Given a continuous   function $\omega$ on $[0,T]$ with values in $\RR^{nd}$, we consider the following stochastic differential equation
\begin{align}\label{auxiliaryeq}
dX_t^{\omega} = F(t, \omega_t, X_t^{\omega})dt + D\sigma(t, \omega_t, X_t^{\omega})dW_t,
\end{align}
with initial condition $X_0$ satisfying hypothesis (I). Under hypotheses (H1)-(H5), equation \eqref{auxiliaryeq} has a unique solution, whose density exists and  satisfies the following Fokker-Planck equation
\begin{align}\label{omega_pde}
\frac{\partial}{\partial t} u^{\omega}_t(x)=\frac{1}{2}\sum_{i,j=1}^d a_{ij}(t,\omega_t,x)\frac{\partial^2}{\partial x_1^i\partial x_1^j}u^{\omega}_t(x)+\langle b(t,\omega_t,x), \nabla u^{\omega}_t(x)\rangle+c(t,\omega_t,x)u^{\omega}_t(x),
\end{align}
with initial condition $u_0^{\omega}=f$, where 
\begin{align}\label{terma}
a = (a_{ij})_{i,j=1}^d= \sigma\sigma^*,
\end{align}
 $b=(b_1,\dots, b_n)$ with $b_i=(b_i^1,\dots,b_i^d)$, $i=1,\dots, n$ and
\begin{align}
b_i^j(t,y,x)=-F_i^j(t,y,x)+\mathbf{1}_{\{i=1\}}\sum_{k=1}^d\frac{\partial}{\partial x_1^k}a_{kj}(t,y,x),
\end{align}
and
\begin{align}\label{termc}
c(t,y,x)=-\sum_{i=1}^n\sum_{j=1}^d\frac{\partial}{\partial x_i^j}F_i^j(t,y,x)+\frac{1}{2}\sum_{j,k=1}^d\frac{\partial^2}{\partial x_1^j\partial x_1^k} a_{jk}(t,y,x),
\end{align}
for any $(t,y,x)\in [0,T]\times \RR^{nd}\times \RR^{nd}$. 
Similarly, if $\eqref{simeq}$ has a solution $X_t$ with quantile $Q_{\alpha}(X_t)$ being continuous in time, then the law of $X_t$ has a density $u$ that is the solution to the following equation 
\begin{align}\label{quan_pde}
\frac{\partial}{\partial t} u_t(x)=&\frac{1}{2}\sum_{i,j=1}^d a_{ij}(t,Q_{\alpha}(u_t),x)\frac{\partial^2}{\partial x_1^i\partial x_1^j}u_t(x)+\langle b(t,Q_{\alpha}(u_t),x), \nabla u_t(x)\rangle\nonumber\\
&+c(t,Q_{\alpha}(u_t),x)u_t(x).
\end{align}

Thus, the proof of Theorem \ref{SDEsu} is reduced  to prove  that   \eqref{quan_pde} admits a unique solution. However, it is not easy to deal with such PDE whose coefficients depend on  quantiles.
We shall find an appropriate  Banach space $\BB$ and 
construct a mapping $\cM$ on $\BB$.
The well-posedness of \eqref{quan_pde} is shown by proving that $\cM$ is a 
contraction map on $\BB$ in Proposition  \ref{lcPDEthm} (below). 

In the next theorem, we prove the well-posedness of   \eqref{quan_pde}.
\begin{theorem}\label{PDEthm}
	Let $f$ be a continuous differentiable function on $\RR^{nd}$ satisfying hypothesis (I). Assume   hypotheses (H1)-(H5).
	Then, there exists a function $u$ on $[0,T]\times \RR^{nd}$, which is the unique solution to PDE \eqref{quan_pde} with initial condition $f$.
\end{theorem}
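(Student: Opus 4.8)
The plan is to recast \eqref{quan_pde} as a fixed-point equation for a map $\cM$ acting on candidate quantile trajectories and then apply the Banach fixed point theorem, which is the statement of Proposition \ref{lcPDEthm}. Let $\BB$ be the space $C([0,T];\RR^{nd})$ of continuous curves $\omega$, equipped for the contraction argument with a weighted norm $\|\omega\|_{\lambda}:=\sup_{t\in[0,T]}e^{-\lambda t}|\omega_t|$, $\lambda$ to be chosen large. Given $\omega\in\BB$, hypotheses (H1)--(H5) guarantee that the frozen auxiliary equation \eqref{auxiliaryeq} has a unique strong solution $X^{\omega}$, whose time-$t$ law has a density $u^{\omega}_t$ solving the linear Fokker--Planck equation \eqref{omega_pde} with $u^{\omega}_0=f$. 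The a priori estimates of Section \ref{s.3} (two-sided Gaussian bounds, tail bounds, strictly positive lower bounds), which use precisely the weak H\"ormander structure together with the integrability condition \eqref{hypi}, show that $u^{\omega}_t$ is continuous, strictly positive, and decays fast enough in space that its $\alpha$-quantile vector $Q_{\alpha}(u^{\omega}_t)$ is well defined and the curve $t\mapsto Q_{\alpha}(u^{\omega}_t)$ is continuous. Setting $\cM(\omega)_t:=Q_{\alpha}(u^{\omega}_t)$ thus gives a map $\cM:\BB\to\BB$, and $\omega^{*}$ is a fixed point of $\cM$ exactly when $u:=u^{\omega^{*}}$ solves \eqref{quan_pde}; conversely any solution of \eqref{quan_pde} with time-continuous quantile yields a fixed point. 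Hence Theorem \ref{PDEthm} is equivalent to the unique solvability of $\omega=\cM(\omega)$.

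The contraction estimate is carried out in two steps. For $\omega^{1},\omega^{2}\in\BB$ we need to control $|\cM(\omega^{1})_t-\cM(\omega^{2})_t|=|Q_{\alpha}(u^{\omega^{1}}_t)-Q_{\alpha}(u^{\omega^{2}}_t)|$. First, the quantile-stability inequality \eqref{ql1} of \cite{kolokoltsov2013nonlinear}, which relies on the uniform lower bound for the densities from Section \ref{s.3}, bounds this by $C\,d(u^{\omega^{1}}_t,u^{\omega^{2}}_t)$ for a suitable distance $d$ between the two densities, itself controlled via the tail estimates by a weighted sup (or $L^{1}$) distance. Second, and this is the content of Proposition \ref{sensiofu}, one bounds $d(u^{\omega^{1}}_t,u^{\omega^{2}}_t)$ by $C\int_0^t\sup_{r\le s}|\omega^{1}_r-\omega^{2}_r|\,ds$. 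Combining the two and inserting the weight $e^{-\lambda t}$, for $\lambda$ large one gets $\|\cM(\omega^{1})-\cM(\omega^{2})\|_{\lambda}\le\tfrac12\|\omega^{1}-\omega^{2}\|_{\lambda}$, so $\cM$ is a contraction on $\BB$; its unique fixed point, together with the equivalence above, gives the existence and uniqueness of the solution of \eqref{quan_pde}. (Equivalently one may prove contraction first on a short interval $[0,T_0]$ with $T_0$ depending only on the structural constants and then concatenate, since all the estimates are uniform over initial data in the class fixed by hypothesis (I).)

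The core analytic input is Proposition \ref{sensiofu}, the sensitivity of $u^{\omega}$ in $\omega$. Since the diffusion matrix $D\sigma\sigma^{*}D^{*}$ of the frozen system is highly degenerate, the direct parabolic-PDE comparison used in \cite{kolokoltsov2013nonlinear} is not available; instead I would represent $u^{\omega}_t$ through the time-inhomogeneous Feynman--Kac formula of Theorem \ref{thmfk}, writing it as an expectation over the quantile-frozen degenerate diffusion \eqref{chain} weighted by $\exp\!\big(\int_0^t c(s,\omega_s,\cdot)\,ds\big)$ and tested against $f$. By (H1)--(H4) the coefficients $a,b,c$ are bounded and Lipschitz in the $y$-variable, so the drift and the Feynman--Kac weight depend on $\omega$ in a Lipschitz way; coupling this with the two-sided Gaussian bounds for the transition density of \eqref{chain} and its relevant derivatives from Section \ref{s.3}, a Duhamel expansion followed by a Gronwall argument produces the required linear-in-time bound on $d(u^{\omega^{1}}_t,u^{\omega^{2}}_t)$, with constants uniform in $\omega$.

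I expect the main obstacle to be exactly this step: making the Feynman--Kac representation rigorous and quantitative for the degenerate $nd$-dimensional system, and carrying the spatial weights through the Duhamel iteration so that the difference estimate still survives integration against the heavy-tailed (anisotropic Gaussian) kernels — this is where the precise shape of the integrability condition \eqref{hypi} and the full strength of the density and derivative bounds of Section \ref{s.3} are needed. By comparison, the quantile-control step is relatively soft once the strictly positive lower bound with spatial decay is in hand, and the fixed-point machinery itself is routine.
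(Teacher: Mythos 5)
Your overall architecture — freeze the quantile as a curve $\omega$, solve the linear degenerate Fokker--Planck equation \eqref{omega_pde} via the time-inhomogeneous Feynman--Kac formula, control $|Q_\alpha(u^{\omega^{1}}_t)-Q_\alpha(u^{\omega^{2}}_t)|$ by the quantile-stability inequality \eqref{ql1} composed with the sensitivity estimate of Proposition \ref{sensiofu}, and close with a fixed point argument — is exactly the paper's strategy. However, there is a genuine gap in how you globalize the contraction to all of $[0,T]$.

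First, your primary route (a Bielecki weighted norm $\sup_t e^{-\lambda t}|\omega_t|$ with $\lambda$ large) relies on the sensitivity bound having the convolution form $C\int_0^t\sup_{r\le s}|\omega^{1}_r-\omega^{2}_r|\,ds$. That is not what the estimate actually gives: the martingale part of $X^{(1),t,x}-X^{(2),t,x}$, treated by Burkholder--Davis--Gundy, produces a term of the form $C\sqrt{t}\,\sup_{s\le t}|\omega^{1}_s-\omega^{2}_s|$ (this is the $\sqrt{t}$ in \eqref{sen1}). For such a term the weight buys nothing: $e^{-\lambda t}\sqrt{t}\sup_{s\le t}|\omega^{1}_s-\omega^{2}_s|\le\sqrt{T}\,\|\omega^{1}-\omega^{2}\|_{\lambda}$ uniformly in $\lambda$, so no choice of $\lambda$ makes the map a contraction on $[0,T]$. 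One could try to recast the stochastic-integral contribution in $L^q$-in-time convolution form to rescue the weighted norm, but as written your Step 2 asserts a bound that is not available, and the contraction constant genuinely forces $t$ small — which is why the paper only obtains a local solution in Proposition \ref{lcPDEthm}, with $t_0$ fixed by \eqref{e.4.25}.

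Second, your fallback (``prove contraction on $[0,T_0]$ and concatenate, since all estimates are uniform over initial data in the class fixed by hypothesis (I)'') hides the real difficulty of the extension step. To restart the equation at time $t_0$ with initial condition $u_{t_0}$ and obtain a time step $t_1$ of uniform length, one must show that $u_{t_0}$ (and more generally $u_t$ for $t\ge t_0$) satisfies the integrability/derivative-decay condition of hypothesis (I) with a constant that is uniform in $t\in[t_0,T]$ and in $\omega$. This is not automatic — (I) is an assumption on $f$, not a property preserved by the flow for free — and proving it is the content of Proposition \ref{hypie}, which requires differentiating the Feynman--Kac representation, the gradient-flow estimates on $X^{\omega,t,x}$, and the two-sided Gaussian bounds, and which is precisely why the uniform bound \eqref{unbd} only holds for $t\ge t_0>0$. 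Your proposal neither states nor proves this propagation of the initial-data class, so the concatenation as described does not close. With Proposition \ref{hypie} (or an equivalent statement) supplied, the rest of your argument matches the paper's.
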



In \cite{kolokoltsov2013nonlinear}, to obtain the stability 
result  the author uses the two-sided bounds   of    the density  and its    first order derivatives  under 
uniform  ellipticity condition. In our hypoellipticity  case,
we   encounter the following difficulties. 
\begin{enumerate}
	\item[(1)]  For   \eqref{chain},  Pigato \cite{pigato2020density}  obtained upper bounds for  the derivatives of transition density of any order. The first derivative with respect to the variable $x_i^j$ $(i=1,\cdots,n; j=1,\cdots,d)$ is given by 
	\begin{align*}
	|\partial_{x_i^j} p(t,x;0,y)| \le \frac{C}{t^{(2\lfloor\frac{ij-1}{d}\rfloor+1+n^2d)/2}}\exp\(-\frac{|\mathcal{T}_t^{-1}(x-\theta_t(y))|^2}{C}\),
	\end{align*} 
	where $y$ is the initial position, $C$ is a constant,  $\lfloor\cdot\rfloor$ denote the integer part function,  $\mathcal{T}$ and $\theta_t$ are  given by \eqref{tt} and \eqref{deterministic} below.  As we see, this bound is more singular  near $t=0$ than  that in the  elliptic case. 

\item[(2)] 
To overcome this problem, we  assume 
that the initial condition $f$ satisfies  certain differentiability and integrability
(over the whole $\RR^{nd}$) conditions,  
%
in hope that the singularity difficulty can be absorbed in the initial condition. However, proceeding with this effort, we immediately encounter the difficulty  that we don't know how to pass the gradient   $  \nabla_x p(t,x;s,y)$ to $\nabla  f$ in the following integral: 
\[
\int_{\RR^{nd}}f(y)\nabla_x p(t,x;s,y)dy 
\]
since $p(t,x;s,y)$ is not the form of $p(t,s, x-y)$.  To get around this difficulty, we apply the time inhomogeneous Feynman-Kac formula.
This enables us    to finish the stability analysis of the solution to \eqref{omega_pde} with respect to $\omega$. 
\end{enumerate}


%

\section{A priori  estimates of the density}\label{s.3}
 
In the remaining part of the    paper, we assume that $d=1$ to simplify the presentation.  The case $d>1$ can be treated analogously  with only   additional notational complexity.  We use $C>0$ to denote  a generic constant  which may vary from occurrence  to occurrence. 
 
First, let us turn to  \eqref{chain}.  We need a result from \cite{de2018strong}. 
To state this result, we need to introduce  the following conditions on  the coefficients $\bar{F}$ and $\bar{\sigma}$. 
\begin{description}
	\item[(C1)] $\bar{F}(t,0)$ is bounded for all $t\in [0,T]$  and  $\bar{a}=\bar{\sigma}\bar{\sigma}^*$ is uniformly  elliptic with the positive constant  $\Lambda$. 
	
	\item[(C2)] $\bar{\sigma}$ is globally  Lipschitz in the space variable
	uniformly in   time
	variable. For all $j=1, 2, \cdots, n $
	the functions $\bar{F}_i, i=1,   \cdots,  j $,  are uniformly $\eta_j$-H\"older continuous in the $j$th spatial variable with $\eta_j \in (\frac{2j-2}{2j-1},1]$, uniformly in time and other spatial variables.
	
	\item[(C2')] The functions $\bar{F}_1, \cdots, \bar{F}_n$ and $\bar{\sigma}$ are   uniformly Lipschitz and $\eta$-H\"older continuous ($\eta \in (0, 1]$) with respect to the underlying space variables  respectively. 

	\item[(C3)] For each integer $2 \le i \le n$, $(t,  x_i, \cdots, x_n ) \in \RR_+ \times \RR^{(n-i+1)d}$, the function $x_{i-1} \in \RR^d \mapsto \bar{F}_i(t, x_{i-1}, \cdots, x_n)$ is    continuously differentiable  and its  derivative, denoted by $(t, x_{i-1}, \cdots, x_n) \in \RR_+ \times \RR^{(n-i+2)d} \mapsto \nabla_{x_{i-1}}\bar{F}_i(t, x_{i-1}, \cdots, x_n)$, is  $\eta$-H\"older continuous in the first space variable $x_{i-1}$  with constant $\kappa$. Moreover, there exists a closed convex subset $\varepsilon_{i-1}$ contained in the set of invertible $d \times d$ matrices, such that for all $t\ge0$, $i=2,\cdots,n$ and $(x_{i-1}, \cdots, x_n) \in \RR^{(n-i+2)d}$, the matrix $\nabla_{x_{i-1}}\bar{F}_i(t, x_{i-1}, \cdots, x_n)$ belongs to $\varepsilon_{i-1}$. 
\end{description}
\begin{remark}\label{imply}
	Conditions (C1), (C2), (C2') and (C3) can be easily verified by  hypotheses (H1)-(H5). In fact,  (C1) and (C3) are the same as   (H1), (H2) and (H5). Additionally, hypotheses  (H3) and (H4) imply   (C2) and (C2').
\end{remark}
\begin{theorem}[see \cite{de2018strong}]\label{strongsol}
	Assume  (C1), (C2) and (C3).  There exists a unique strong
	solution to   SDE \eqref{chain}.
\end{theorem}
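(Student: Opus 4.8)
The plan is to obtain Theorem~\ref{strongsol} through the Yamada--Watanabe principle: first establish weak existence of a solution to \eqref{chain}, then prove pathwise uniqueness, and conclude that there is a unique strong solution. Throughout one works not in the Euclidean geometry of $\RR^{nd}$ but in the intrinsic anisotropic scaling of the Langevin chain, in which time $t$ acts like $t^{i-1/2}$ on the $i$-th block of coordinates; the associated dilation operator $\mathcal{T}_t$ and the deterministic flow $\theta_t$ (the objects in \eqref{tt} and \eqref{deterministic}) are the correct yardsticks for every estimate below.

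\emph{Step 1 (weak existence and the fundamental solution).} Since $\bar F(t,\cdot)$ has at most linear growth by (C1)--(C2), standard a priori $L^p$ moment bounds together with a tightness/Aldous argument on mollified approximations yield a weak solution of the martingale problem for the generator $L_t=\tfrac12\,\bar a(t,x)\,\partial^2_{x_1}+\langle \bar F(t,x),\nabla\rangle$. More precisely, under (C1)--(C3) the forward parametrix method with a Gaussian proxy frozen along $\theta_t$ produces a transition density $p(s,y;t,x)$ solving the Kolmogorov equations, together with the two-sided Gaussian bound and the derivative bounds quoted in the introduction (the estimate for $\partial_{x_i^j}p$ with exponent $(2\lfloor(ij-1)/d\rfloor+1+n^2d)/2$). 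The H\"older thresholds $\eta_j\in\big(\tfrac{2j-2}{2j-1},1\big]$ imposed in (C2) are exactly what is needed to make the parametrix series converge at block $j$.

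\emph{Step 2 (pathwise uniqueness via an It\^o--Tanaka/Zvonkin transform).} Because $\bar F$ is only H\"older --- not Lipschitz --- in the degenerate directions, one removes the obstructive part of the drift by a change of variables. Solve the auxiliary backward equation $\partial_s\phi+L_s\phi=-\,(\text{the non-Lipschitz part of }\bar F)$ on $[0,T]$ with zero terminal condition; the Schauder theory associated with the hypoelliptic operator $L$, in the anisotropic H\"older scale adapted to $\mathcal{T}_t$, gives a solution $\phi$ with $\nabla\phi$ small and H\"older, so that $x\mapsto x+\phi(s,x)$ is a $C^1$-diffeomorphism uniformly in $s\in[0,T]$. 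Applying \Itos formula to $Y_t:=X_t+\phi(t,X_t)$ converts \eqref{chain} into an SDE whose coefficients are (locally) Lipschitz in all spatial variables; a Gronwall estimate on $\EE|Y^1_t-Y^2_t|^2$ gives pathwise uniqueness for $Y$, which transfers back to $X$ since the transform is bi-Lipschitz. Combining Steps~1 and 2 with Yamada--Watanabe yields the unique strong solution.

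The main obstacle is the degeneracy: the Brownian motion drives only the first block, and non-degeneracy of the remaining blocks is produced purely by the bracket (weak H\"ormander) structure encoded in the full-rank hypothesis (C3). Consequently every ingredient --- the two-sided kernel bound, the gradient estimates, and above all the Schauder regularity of $\phi$ that makes the Zvonkin transform work --- must be carried out in the non-Euclidean geometry defined by $\mathcal{T}_t$, where the off-diagonal time singularities are markedly worse than in the uniformly elliptic case. Tracking these singularities block by block, and verifying that the exponents $\eta_j$ in (C2) are precisely sufficient for the parametrix and for the PDE estimates to close, is the heart of the argument; the time-inhomogeneity of $\bar F$ and $\bar\sigma$ adds only bookkeeping on top of this.
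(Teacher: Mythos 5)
This theorem is not proved in the paper at all: it is imported verbatim from \cite{de2018strong}, so there is no in-paper argument to compare against. Your sketch (weak existence via a parametrix/martingale-problem argument, pathwise uniqueness via a Zvonkin-type transform built on Schauder estimates for the degenerate Kolmogorov operator in the anisotropic H\"older scale attached to $\mathcal{T}_t$, then Yamada--Watanabe) is in fact the strategy of that cited reference, including the role of the thresholds $\eta_j\in(\tfrac{2j-2}{2j-1},1]$, so it is a faithful outline of the external proof rather than an alternative to anything done here.
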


We also need a result about the    Gaussian estimate for the density of the solution to   \eqref{chain}.
To state this result  we introduce the scale matrix $\mathcal{T}$ and shift vector $\theta$ as follows. Fix $t\geq 0$ and  $x\in\RR^{nd}$.   Let  $\mathcal{T}_t$ denote 
the following  $nd \times nd$ diagonal matrix: 
\begin{align}\label{tt}
\mathcal{T}_t=\begin{pmatrix}
\mathcal{T}_t^1 & 0 &\dots & 0\\
0 & \mathcal{T}_t^2  &\dots & 0\\
\vdots & \vdots & \ddots &\vdots\\
0 & 0 & \dots &\mathcal{T}_t^n
\end{pmatrix}=\begin{pmatrix}
t^{\frac{1}{2}}I_d & 0 &\dots & 0\\
0 & t^{\frac{3}{2}}I_d  &\dots & 0\\
\vdots & \vdots & \ddots &\vdots\\
0 & 0 & \dots &t^{n-\frac{1}{2}}I_d
\end{pmatrix}.
\end{align}
Let $\theta_\cdot (x): [0,T]\mapsto\RR^{nd}$ be the solution to following (deterministic) ODE,
\begin{align}\label{deterministic}
\begin{cases}
\frac{d}{dt}\theta_t(x) = \bar{F}\(t,\theta_t(x)\),\\
\theta_0(x) =  x.
\end{cases}
\end{align}
\begin{theorem}[see \cite{delarue2010density}]\label{tse}
    Assume (C1), (C2') and (C3).
	Let $X$ be the solution to \eqref{chain} with initial condition $X_0=x$, where $x\in \RR^{nd}$. Then,  for any $t\in [0,T]$, the law of  $X_t$  admits a probability density, denoted by $p_t(\cdot,x)$. Moreover, there exists a constant $C_T \ge 1$, depending on $T, n, d, \Lambda, \eta,  $
	the Lipschitz constants in (C1)-(C3),   and $\varepsilon_1, \varepsilon_2, \cdots, \varepsilon_{n-1}$, 
	such that for any $y\in \RR^{nd}$,
	\begin{align}\label{lower0}
	\frac{1}{C_Tt^{n^2d/2}}\exp \( -C_T |\mathcal{T}_t^{-1}(\theta_t (x) - y)|^2\) \le p_t(y,x) \le 	\frac{C_T}{t^{n^2d/2}}\exp \( -C_T^{-1} |\mathcal{T}_t^{-1}(\theta_t (x) - y)|^2\),
	\end{align}
	where $\mathcal{T}_t$ and $\theta_t(x)$ are given by  \eqref{tt} and \eqref{deterministic} respectively.
\end{theorem}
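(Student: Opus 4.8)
\emph{Proof proposal.} This is in essence Theorem~1.1 of \cite{delarue2010density}, applied to the coefficients $\bar F,\bar\sigma$ of \eqref{chain}, whose hypotheses are guaranteed by (C1), (C2$'$), (C3); I sketch the argument in our notation. The plan is a McKean--Singer parametrix expansion around a Gaussian proxy obtained by freezing the coefficients along the deterministic flow $s\mapsto\theta_s(x)$ from \eqref{deterministic}, which yields the probability density and the upper bound, followed by an Aronson--Fabes--Stroock chaining argument for the lower bound. The role of $\mathcal T_t$ in \eqref{tt} is to record the intrinsic anisotropic scaling of the chain: the $i$-th block $X^i$ is produced from the noisy block $X^1$ by $i-1$ successive time integrations, so it fluctuates on the scale $t^{\,i-1/2}$. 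First I would fix the terminal point $y$, freeze the coefficients of \eqref{chain} at $\theta_s(x)$, and observe that the resulting linear SDE is Gaussian, with transition density $\widetilde p_t(\cdot,x)$ of mean $\theta_t(x)$ and some covariance $\Sigma_t(x)$.

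The heart of the matter is to show that $\langle \Sigma_t(x)\zeta,\zeta\rangle$ is comparable to $|\mathcal T_t\zeta|^2$, uniformly in $t\in(0,T]$ and in $x,\zeta\in\RR^{nd}$, with constants depending only on the data listed in the statement; once this ``good scaling'' of the frozen covariance is in hand, $\widetilde p_t(y,x)$ obeys exactly the two-sided bound \eqref{lower0}. This is precisely where the weak H\"ormander structure enters. Writing $\Sigma_t(x)$ as a deterministic integral built from the fundamental solution of the linearized drift, the off-diagonal blocks are generated through the gradients $\nabla_{x_{i-1}}\bar F_i$; hypothesis (C3), i.e. that these gradients are uniformly invertible (belong to the closed convex sets $\varepsilon_{i-1}$ of invertible matrices) and $\eta$-H\"older, ensures that the iterated time integrals of the noise spread mass into every one of the $n$ blocks at exactly the rate dictated by $\mathcal T_t$ --- neither faster nor slower. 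A scaling argument (reducing to $t=1$ and using the uniform bounds from (C1)--(C3) on the freezing curve) then delivers the claimed comparability.

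Next I would set up the parametrix series $p_t(y,x)=\sum_{k\ge0}\bigl(\widetilde p\otimes H^{(k)}\bigr)_t(y,x)$, where $\otimes$ is time-space convolution, $H_t(y,x)=\bigl(L_t-\widetilde L_t^{\,y}\bigr)\widetilde p_t(y,x)$ is the parametrix kernel ($L$ the generator of \eqref{chain}, $\widetilde L^{y}$ the frozen one along $\theta_\cdot(x)$), and $H^{(k)}$ is its $k$-fold iterate. Using (C2$'$) --- uniform Lipschitz and $\eta$-H\"older continuity of $\bar F_1,\dots,\bar F_n$ and $\bar\sigma$ --- together with the Gaussian bounds of the previous step on $\widetilde p$ and its derivatives, each factor $H$ carries an \emph{integrable} singularity in time (the H\"older regularity in the $i$-th block beats the cost $t^{-(i-1/2)}$ of differentiating the proxy there), so the series converges absolutely, is dominated term-by-term by Gaussians with the $\mathcal T_t$-scaled covariance, and the resulting (Mittag-Leffler type) time-convolution series resums to a single Gaussian of the form \eqref{lower0}. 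This simultaneously shows that the law of $X_t$ has a density solving the Fokker--Planck equation and gives the upper bound in \eqref{lower0}.

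For the lower bound I would run a chaining argument in the metric induced by $\mathcal T_t$. The parametrix expansion first yields a diagonal estimate: for $s$ small and $z$ within $O(1)$ of $\theta_s(x)$ in the $\mathcal T_s$-metric one has $p_s(z,x)\ge c\,s^{-n^2d/2}$. Then, applying the Chapman--Kolmogorov identity $p_t(y,x)=\int p_{t-s}(y,z)\,p_s(z,x)\,dz$ repeatedly along a chain of intermediate points situated near the flow $\theta_\cdot$, with $N\asymp 1+|\mathcal T_t^{-1}(\theta_t(x)-y)|^2$ time steps of size $\sim t/N$ chosen to absorb the Gaussian exponent, propagates the diagonal bound to an arbitrary terminal point $y$ and produces the lower bound in \eqref{lower0}. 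The main obstacle throughout is the good-scaling estimate of the second paragraph: establishing uniformly (in the freezing point and in $t\in(0,T]$) that the proxy covariance is comparable to $\mathcal T_t\mathcal T_t^{*}$; everything else is a technical but by-now-standard adaptation of the elliptic Aronson machinery to the anisotropic scale $\mathcal T_t$, carried out in detail in \cite{delarue2010density} under exactly the hypotheses (C1), (C2$'$), (C3).
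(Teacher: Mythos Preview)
Your sketch is a faithful outline of the parametrix-plus-chaining strategy carried out in \cite{delarue2010density}, and there is nothing to correct in it. Note, however, that the paper itself does not supply a proof of this theorem: it is stated as a citation (``see \cite{delarue2010density}'') and used as a black box, so there is no in-paper argument to compare against.
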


\begin{remark}  We still cite the theorem for general dimension $d$. 
	However, we will continue to work on the case $d=1$. 
\end{remark}

Let $F: \RR^n\rightarrow \RR^n$ be defined as in Section  \ref{s.1}
 (with $d=1$). 
For any continuous function $\omega:[0,T]\to \RR^n$, and $x\in \RR^n$, we define, analogously to \eqref{deterministic}, a function $\theta^{\omega}=(\theta^{\omega}_1,\dots,\theta^{\omega}_n)$ on $[0,T]$ with values in $\RR^n$ by the following ODE
\begin{align}\label{determinF}
\begin{cases}
\frac{d}{dt}\theta^{\omega} (t,x) = F\(t,\omega_t, \theta^{\omega} (t,x)\), t\in [0, T]\,,
\\
\theta^{\omega} (0,x)=x\,.
\end{cases}
\end{align}
We have  the following lemma about $\theta^{\omega}$.
\begin{lemma}\label{lmma}
Assume hypotheses (H1)-(H5)   and  assume that   $\omega$ is a  continuous function  of $t\in [0, T]$.  
Let $\theta^{\omega}$  satisfy  \eqref{determinF}. Then, for $0 \leq t\leq T$ and $x\in \RR^n$, the following inequalities hold
\begin{align}\label{soltheta}
e^{-\kappa t}|x|-\kappa t\leq |\theta^{\omega}(t,x)|\leq  \(|x|+\kappa t\)e^{\kappa t}
\end{align}
and
\begin{align}\label{lm2}
e^{-n\kappa t}\leq \det (\nabla \theta^{\omega}(t, x))\leq e^{n\kappa  t},
\end{align}
where $\kappa$ is the  positive constant that appeared  in hypotheses (H1)-(H5).

\end{lemma}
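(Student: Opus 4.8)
I would prove the two estimates in \eqref{soltheta} and \eqref{lm2} separately, both by elementary Gr\"onwall-type arguments applied directly to the flow ODE \eqref{determinF}; no input beyond hypotheses (H1)--(H5) is needed. The first preparatory step is a linear growth bound for $F$: combining (H1) with the Lipschitz bound (H3) applied with $\bar x=0$, $\bar y=y$ gives $|F(t,y,x)|\le |F(t,y,0)|+\kappa|x|\le\kappa(1+|x|)$ for all $(t,y,x)\in[0,T]\times\RR^n\times\RR^n$.

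For \eqref{soltheta}, write $\phi(t):=|\theta^{\omega}(t,x)|$, which is locally Lipschitz in $t$ since $\theta^{\omega}(\cdot,x)$ is $C^1$, so that $|\phi'(t)|\le|\tfrac{d}{dt}\theta^{\omega}(t,x)|=|F(t,\omega_t,\theta^{\omega}(t,x))|\le\kappa(1+\phi(t))$ for a.e.\ $t$. For the upper bound I would look at $\psi(t):=e^{-\kappa t}\phi(t)$, for which $\psi'(t)=e^{-\kappa t}(\phi'(t)-\kappa\phi(t))\le\kappa e^{-\kappa t}\le\kappa$, hence $\psi(t)\le|x|+\kappa t$, i.e.\ $\phi(t)\le(|x|+\kappa t)e^{\kappa t}$. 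For the lower bound I would look at $\chi(t):=e^{\kappa t}\phi(t)$, for which $\chi'(t)=e^{\kappa t}(\phi'(t)+\kappa\phi(t))\ge-\kappa e^{\kappa t}$, hence $\chi(t)\ge|x|-(e^{\kappa t}-1)$ and therefore $\phi(t)\ge e^{-\kappa t}|x|-(1-e^{-\kappa t})\ge e^{-\kappa t}|x|-\kappa t$, the last step using the elementary inequality $1-e^{-u}\le u$ for $u\ge0$.

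For the Jacobian bound \eqref{lm2}, the plan is to use the variational equation. Since $x\mapsto F(t,y,x)$ is $C^1$ (indeed $C^2$ by (H4)) and $F$ is continuous in $t$, standard ODE theory gives that $x\mapsto\theta^{\omega}(t,x)$ is continuously differentiable and that $M(t):=\nabla\theta^{\omega}(t,x)$ solves $\tfrac{d}{dt}M(t)=A(t)M(t)$ with $M(0)=I_n$, where $A(t)$ is the Jacobian of $F$ in its third argument evaluated at $(t,\omega_t,\theta^{\omega}(t,x))$. By Liouville's (Jacobi's) formula, $\det M(t)=\exp\bigl(\int_0^t\operatorname{tr}A(s)\,ds\bigr)$. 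The Lipschitz hypothesis (H3) forces the operator norm of this Jacobian to be at most $\kappa$, so $|\partial_{x_i}F_i|\le\kappa$ for each $i$ and thus $|\operatorname{tr}A(s)|=\bigl|\sum_{i=1}^n\partial_{x_i}F_i(s,\omega_s,\theta^{\omega}(s,x))\bigr|\le n\kappa$. Integrating gives $-n\kappa t\le\int_0^t\operatorname{tr}A(s)\,ds\le n\kappa t$, and exponentiating yields \eqref{lm2}.

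I do not expect a genuine obstacle: the computation is routine. The only points that warrant a line of justification are the differentiability of the flow with respect to the initial condition together with the validity of Liouville's formula (standard, guaranteed by $F\in C^1$ in the state variable plus the linear growth bound, which also ensures the solution exists on all of $[0,T]$) and the deduction of the pointwise bound $|\partial_{x_i}F_i|\le\kappa$ from the Lipschitz assumption (H3); both are elementary.
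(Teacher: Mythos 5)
Your proof is correct, and two of the three estimates (the upper bound in \eqref{soltheta} and the Liouville-formula argument for \eqref{lm2}) follow essentially the same route as the paper: integral/Gr\"onwall bound using the linear growth $|F(t,y,x)|\le\kappa(1+|x|)$ from (H1)+(H3), and then the variational equation plus $\det(\nabla\theta^{\omega}(t,x))=\exp\bigl(\int_0^t\operatorname{tr}[\nabla F]\,dr\bigr)$ with $|\operatorname{tr}\nabla F|\le n\kappa$ from (H3). Where you genuinely diverge is the lower bound in \eqref{soltheta}. The paper proves it by introducing the time-reversed ODE $\frac{d}{ds}\hat\theta_s=-F(s,\omega_s,\hat\theta_s)$ with terminal value $\theta^{\omega}(t,x)$, applying the already-established forward upper bound to that backward flow, and reading off $|x|\le(|\theta^{\omega}(t,x)|+\kappa t)e^{\kappa t}$. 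You instead run a reverse Gr\"onwall argument directly on $\chi(t)=e^{\kappa t}|\theta^{\omega}(t,x)|$, getting $\chi'\ge-\kappa e^{\kappa t}$ a.e.\ and then using $1-e^{-u}\le u$. Both are valid; the paper's trick recycles the upper bound and sidesteps any discussion of differentiability of $t\mapsto|\theta^{\omega}(t,x)|$, while your version is more self-contained and in fact yields the marginally sharper constant $1-e^{-\kappa t}$ in place of $\kappa t$. The only point you should make fully explicit is the one you already flag: $\phi(t)=|\theta^{\omega}(t,x)|$ is Lipschitz, hence absolutely continuous with $|\phi'|\le|F(t,\omega_t,\theta^{\omega}(t,x))|$ a.e.\ (by the reverse triangle inequality on difference quotients), which is what licenses integrating the pointwise differential inequalities for $\psi$ and $\chi$.
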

\begin{proof}
By the Lipschitz property and uniformly boundedness (at the origin) of $F$, we see  that
\begin{align*}
|\theta^{\omega}(t, x)|=&\Big|x +\int_0^tF(r,\omega_r,\theta^{\omega}(r,x))dr\Big|\\
\leq &|x|+\int_0^t \( |F(r,\omega_r,0)| + \kappa|\theta^{\omega}(r,x)|\)dr\\
\leq &|x|+\kappa t+\int_0^t\kappa|\theta^{\omega}(r,x)|dr.
\end{align*}
An application of Gronwall's inequality  yields 
\begin{align}
|\theta^{\omega}(t,x)|\leq \(|x|+\kappa t\)e^{\kappa t}.\label{e.3.4}
\end{align}
This proves the second inequality in \eqref{soltheta}. To prove the first inequality, we consider the following backward ODE:
\begin{align}\label{bode}
\begin{cases}
\frac{d}{ds}\hat{\theta}_s =  -  F(s, \omega_s, \hat{\theta}), \ \ 0\le s<t, \\
\hat{\theta}_t=\xi\in \RR^n.
\end{cases}
\end{align}
Similar to \eqref{e.3.4}, we can show that
\[|\hat{\theta}_s|\leq  \(|\xi|+\kappa(t-s)\)e^{\kappa (t-s)},\]
for all $s\in [0,t]$. Notice that $\hat{\theta}=\{\theta^{\omega}(s,x)\,; \ s\in (0,t)\}$ is the solution to \eqref{bode} with terminal condition $\hat{\theta}_t=\theta^{\omega}(t,x)$. 
Then,  we have 
\[
|x|=\hat{\theta}_0\leq \(|\theta^{\omega}(t,x)|+\kappa t\)e^{\kappa t}.
\]
The proof of inequality \eqref{soltheta}  is then completed.

Taking the derivative of the following equation with respect to $x$,
\[
\theta^{\omega}(t, x)=x+\int_s^t F(r,\omega_r, \theta_r(x))dr,
\]
we have
\[
\nabla \theta^{\omega}(t, x)=I_d+\int_s^t\nabla F(r, \omega_r, \theta^{\omega}(r, x))\nabla \theta^{\omega}(r, x)dr.
\]
By Liouville's formula, we can write
\begin{align}\label{lm21}
\det(\nabla \theta^{\omega}(t, x)) = \exp\(\int_s^t {\rm tr}[\nabla F\(r, \omega_r, \theta^{\omega}(r, x)\)] dr\).
\end{align}
Now the hypothesis  (H3) can be applied to  obtain  \eqref{lm2}. The lemma is then proved.  
\end{proof}	
By Lemma \ref{lmma} and the implicit function theorem, we have the following corollary.
\begin{corollary}\label{coro}
Assume hypotheses (H1)-(H5) and assume   $\omega(t)$, $0\leq t\leq T$  is a continuous function. Let $\theta^{\omega}$  satisfy  \eqref{determinF}.  Then,  there exist a function $(\theta^{\omega})^{-1}(t,\cdot)$ such that
\[
\theta^{\omega}\big(t,(\theta^{\omega})^{-1}(t,x)\big)=(\theta^{\omega})^{-1}\big(t,\theta^{\omega}(t,x)\big)=x,
\]
for all $x\in \RR^n$. Moreover, the gradient of $(\theta^{\omega})^{-1}$ 
with respect to the space variable  satisfies the following inequality: 
\begin{align}\label{lm3}
e^{-n\kappa t}\leq \det (\nabla (\theta^{\omega})^{-1}(t, x))\leq e^{n\kappa t}.
\end{align}
\end{corollary}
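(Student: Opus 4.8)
The plan is to combine the non‑degeneracy of the spatial Jacobian already established in Lemma \ref{lmma} with the bijectivity of the ODE flow map associated with \eqref{determinF}. First, fix $t\in[0,T]$ and view $x\mapsto\theta^{\omega}(t,x)$ as the time‑$t$ flow of the time‑dependent, globally Lipschitz-in-space vector field $(r,z)\mapsto F(r,\omega_r,z)$. By (H3)–(H4) this field is continuously differentiable in $z$, so the standard theory of smooth dependence on initial data gives that $x\mapsto\theta^{\omega}(t,x)$ is $C^1$ on $\RR^n$, with $\nabla\theta^{\omega}(t,x)$ solving the variational equation exactly as in the proof of Lemma \ref{lmma}. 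In particular \eqref{lm2} shows $\det(\nabla\theta^{\omega}(t,x))\ge e^{-n\kappa t}>0$, so $\theta^{\omega}(t,\cdot)$ is a local $C^1$ diffeomorphism near every point.

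Next I would pass from ``local'' to ``global''. For $\xi\in\RR^n$, let $\hat\theta^{\xi}$ be the solution on $[0,t]$ of the backward ODE \eqref{bode} with terminal value $\hat\theta^{\xi}_t=\xi$; existence and uniqueness of $\hat\theta^{\xi}$ follow from the Lipschitz property of $F$. Define $(\theta^{\omega})^{-1}(t,\xi):=\hat\theta^{\xi}_0$. The observation used in the proof of Lemma \ref{lmma}, that $s\mapsto\theta^{\omega}(s,x)$ solves \eqref{bode} with terminal value $\theta^{\omega}(t,x)$, gives $(\theta^{\omega})^{-1}\big(t,\theta^{\omega}(t,x)\big)=x$; conversely, running the forward equation \eqref{determinF} from the point $\hat\theta^{\xi}_0$ and invoking uniqueness for the forward flow gives $\theta^{\omega}\big(t,(\theta^{\omega})^{-1}(t,\xi)\big)=\xi$. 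Hence $\theta^{\omega}(t,\cdot)$ is a bijection of $\RR^n$ whose inverse is $(\theta^{\omega})^{-1}(t,\cdot)$; being a global bijection that is locally a $C^1$ diffeomorphism, it is a $C^1$ diffeomorphism of $\RR^n$, and by the inverse function theorem $\nabla(\theta^{\omega})^{-1}(t,\xi)=\big[\nabla\theta^{\omega}(t,x)\big]^{-1}$ with $x=(\theta^{\omega})^{-1}(t,\xi)$.

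Finally, taking determinants gives $\det\big(\nabla(\theta^{\omega})^{-1}(t,\xi)\big)=1/\det\big(\nabla\theta^{\omega}(t,x)\big)$, so the two-sided bound \eqref{lm2} from Lemma \ref{lmma} immediately yields $e^{-n\kappa t}\le\det\big(\nabla(\theta^{\omega})^{-1}(t,\xi)\big)\le e^{n\kappa t}$, which is \eqref{lm3}. The only step that requires genuine care is the passage from ``local diffeomorphism at every point'' to ``global diffeomorphism'': this is not automatic for a general $C^1$ local diffeomorphism of $\RR^n$, and it is precisely the uniqueness of solutions to the forward equation \eqref{determinF} and to the backward equation \eqref{bode} that supplies the global injectivity and surjectivity. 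Everything else is routine bookkeeping with the estimates already proved in Lemma \ref{lmma}.
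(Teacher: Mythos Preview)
Your proof is correct and follows essentially the same route as the paper, which merely states that the corollary follows from Lemma~\ref{lmma} and the implicit function theorem without further elaboration. You have in fact supplied the missing detail the paper glosses over---namely, that global (as opposed to merely local) invertibility comes from the forward/backward ODE flow \eqref{determinF}--\eqref{bode} and uniqueness of solutions, after which \eqref{lm3} is immediate from \eqref{lm2} by taking reciprocals of determinants.
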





In  the next proposition, we provide a tail estimate for the solution to \eqref{omega_pde}.
\begin{proposition}\label{epsilon}
Assume hypotheses (H1)-(H5). Let $f$ be a continuous integrable function on $\RR^{n}$. Then for any $\varepsilon>0$, there exist $K>0$ such that 
\begin{align}
\int_{\mathcal{G}_K} |u^{\omega}_t(x)|dx \le \varepsilon,
\end{align}
 for any $t\in [0,T]$ and for any  continuous function $\omega:[0,T]\to\RR^{n}$, where 
 \[
 \mathcal{G}_K=\Big\{x=(x_1,\dots,x_n) \in \RR^n: \max_{1\leq i\leq n}|x_i | \ge K \Big\}.
 \]
\end{proposition}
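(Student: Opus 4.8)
The plan is to reduce the tail estimate for the solution $u^\omega_t$ of the Fokker--Planck equation \eqref{omega_pde} to the two-sided Gaussian bounds available for the diffusion \eqref{chain} via the probabilistic (Feynman--Kac) representation. Since $u^\omega_t$ is the density of the solution $X^\omega_t$ of the auxiliary SDE \eqref{auxiliaryeq}, for any Borel set $A$ we have $\int_A u^\omega_t(x)\,dx = \PP(X^\omega_t \in A)$. Thus it suffices to show that $\PP(X^\omega_t \in \mathcal{G}_K)$ is small, uniformly in $t\in[0,T]$ and in the continuous path $\omega$. First I would freeze the argument $\omega$ and view \eqref{auxiliaryeq} as a (time-inhomogeneous) SDE of the form \eqref{chain} with drift $\bar F(t,x) = F(t,\omega_t,x)$ and diffusion $\bar\sigma(t,x) = \sigma(t,\omega_t,x)$; by Remark \ref{imply} hypotheses (H1)--(H5) guarantee that (C1), (C2'), (C3) hold for these coefficients, and crucially the constants in those conditions ($\Lambda$, the Lipschitz/H\"older constants, the sets $\varepsilon_i$, $\kappa$) are \emph{independent of $\omega$}, since $F$ and $\sigma$ are uniformly bounded/Lipschitz in the $y$-variable. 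Hence Theorem \ref{tse} applies and gives a constant $C_T$, not depending on $\omega$, such that the transition density $p^\omega_t(y,x)$ of $X^\omega_t$ started from $x$ satisfies the upper bound in \eqref{lower0} with $\theta$ replaced by $\theta^\omega$ from \eqref{determinF}.

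Next I would condition on the initial value: writing $X_0$ with density $f$ and using independence of $X_0$ and $W$,
\begin{align}\label{e.rep}
\int_{\mathcal{G}_K}|u^\omega_t(x)|\,dx = \int_{\RR^n} f(x)\,\PP\big(X^{\omega,x}_t\in\mathcal{G}_K\big)\,dx = \int_{\RR^n}\int_{\mathcal{G}_K} f(x)\,p^\omega_t(y,x)\,dy\,dx,
\end{align}
and then bound $p^\omega_t(y,x) \le C_T t^{-n^2/2}\exp(-C_T^{-1}|\mathcal{T}_t^{-1}(\theta^\omega_t(x)-y)|^2)$. The strategy from here is a standard tail-of-Gaussian argument combined with the deterministic flow estimate \eqref{soltheta}: on the event $y\in\mathcal{G}_K$, at least one block $|y_i|\ge K$; if $|x|$ is bounded (say $|x|\le R$), then by \eqref{soltheta} $|\theta^\omega_t(x)|\le (R+\kappa T)e^{\kappa T}=:M_R$, so $|y_i - (\theta^\omega_t(x))_i|\ge K - M_R$ for $K$ large, which forces the $i$-th block of $\mathcal{T}_t^{-1}(\theta^\omega_t(x)-y)$ to have magnitude $\ge (K-M_R)/t^{i-1/2}\ge (K-M_R)/T^{n-1/2}$; integrating the Gaussian tail in $y$ (the normalization $C_T t^{-n^2/2}$ is exactly the right power to make $\int_{\RR^n} p^\omega_t(y,x)\,dy$ controlled, so the excess decay beats the singularity) gives a bound of the form $C\exp(-c(K-M_R)^2)$ uniformly in $t\in[0,T]$ and $\omega$. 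For the complementary region $|x|>R$ we simply bound $\PP(X^{\omega,x}_t\in\mathcal{G}_K)\le 1$ and use $\int_{|x|>R}f(x)\,dx<\varepsilon/2$, which is possible since $f$ is integrable. Choosing $R$ so that $\int_{|x|>R}f\,dx<\varepsilon/2$ and then $K$ so that $C_T\,|f|_{L^1}\exp(-c(K-M_R)^2)<\varepsilon/2$ completes the estimate.

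Putting the pieces together: split $\int_{\RR^n} f(x)\,\PP(X^{\omega,x}_t\in\mathcal{G}_K)\,dx$ into $|x|\le R$ and $|x|>R$, bound the first by the Gaussian tail estimate (uniform in $t,\omega$) and the second by $\int_{|x|>R}f(x)\,dx$, and conclude $\int_{\mathcal{G}_K}|u^\omega_t(x)|\,dx\le\varepsilon$ for $K=K(\varepsilon)$ large, uniformly in $t\in[0,T]$ and in continuous $\omega:[0,T]\to\RR^n$. The main obstacle is verifying carefully that Theorem \ref{tse} (hence the constant $C_T$ and the form of the bound) genuinely applies to the $\omega$-dependent coefficients with constants independent of $\omega$ — that is, checking that freezing $\omega$ produces coefficients satisfying (C1), (C2'), (C3) with $\omega$-uniform constants, which relies on the uniformity built into (H1)--(H5) in the $y$-argument. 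A secondary technical point is handling the anisotropic scaling matrix $\mathcal{T}_t$ when extracting the Gaussian tail decay: one must make sure the decay in the "bad" block survives after integrating over all of $y\in\RR^n$ against the unnormalized density, for which it is cleanest to first integrate out the $n-1$ "good" blocks (each contributing a factor comparable to the corresponding diagonal entry of $\mathcal{T}_t$, which cancels against the $t^{-n^2/2}$ prefactor) and retain the exponential smallness coming only from the block where $|y_i|\ge K$.
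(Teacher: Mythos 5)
Your proposal is correct and follows essentially the same route as the paper: represent $u^\omega_t$ via the transition density of the frozen-coefficient equation, invoke Theorem \ref{tse} with the observation that $C_T$ is independent of $\omega$ (the paper makes exactly this point via Remark \ref{imply}), split the integral over the initial point into a compact region and its complement, control the flow $\theta^\omega$ on the compact part by Lemma \ref{lmma} (inequality \eqref{soltheta}), and extract the Gaussian tail after rescaling by $\mathcal{T}_t$, whose Jacobian cancels the $t^{-n^2/2}$ singularity. The only differences are cosmetic (the paper changes variables in the terminal point before splitting on the initial point, and states the conclusion as ``$K$ large enough'' rather than an explicit $\exp(-c(K-M_R)^2)$ bound), so no further comment is needed.
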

\begin{proof}
For any $\varepsilon>0$, due to integrability of $f$, we can choose $\bar{K}$ such that 
\begin{align}\label{bkf}
\int_{|x|\ge \bar{K}} |f(x)|dx \le \varepsilon.
\end{align}
Denote by $p^{\omega}_t(x,y)$ the  transition 
 density of $X^{\omega}$ from $y$ at time $0$ to $x$ at time $t$. Then, it is well-known 
\[
u^{\omega}_t(x) = \int_{\RR^n} p^{\omega}_t(x,\xi)f(\xi) d\xi.
\]
Notice that hypotheses (H1)-(H5) ensures   that functions $\bar{\sigma}$ and $\bar{F}$ given by
\[
\bar{\sigma}(t,x)=\sigma(t,\omega_t,x),\ \mathrm{and}\ \bar{F}(t,x)=F(t,\omega_t,x),
\]
for all $(t,x)\in [0,T]\times \RR^n$, satisfy conditions (C1)-(C3) and (C2'). Additionally,  the independence   of $t$ and $x$  of the constant of $\kappa$ 
in  hypotheses (H1)-(H5) and Remark \ref{imply} imply  that  the constant $C_T$ appearing  in Theorem \ref{tse} is independent of the choice of $\omega$. This allows us to apply Theorem \ref{tse}  to obtain 
\begin{align*}
\int_{\mathcal{G}_K}|u^{\omega}_t(x)|dx&= \int_{\mathcal{G}_K}\left|\int_{\RR^n} p^{\omega}_t(x,\xi)f(\xi)d\xi \right|dx \\
&\le \int_{\mathcal{G}_K}\int_{\RR^n} C_Tt^{-n^2/2} \exp \( -C_T^{-1} |\mathcal{T}_t^{-1}(x - \theta^{\omega} (t,\xi))|^2\)|f(\xi)|d\xi dx\\
&= \int_{\mathcal{G}^1_K}\int_{\RR^n} C_Tt^{-n^2/2} \exp \( -C_T^{-1} |\mathcal{T}_t^{-1}(y)|^2\)|f(\xi)|d\xi dy,
\end{align*}
where $\theta^{\omega}$ is defined by  \eqref{determinF} and
\[
\mathcal{G}^1_K=\{y\in \RR^{n}:\max_{1\le i \le n} |y_i+\theta^{\omega}_i (t,\xi)|>K\}.
\]
Performing a change of variable $z=\mathcal{T}_t^{-1}(y)$, we have
\begin{align*}
\int_{\mathcal{G}_K}&|u^{\omega}_t(x)|dx \le \int_{\mathcal{G}^2_K}\int_{\RR^n}C_T\exp \( -\frac{|z|^2}{C_T}\)f(\xi) d\xi dz \\
&\le \int_{\RR^n}\int_{\{|\xi| >\bar{K}\}}C_T\exp \( -\frac{|z|^2}{C_T}\)f(\xi) d\xi dz + \int_{\mathcal{G}^2_K } \int_{\{ |\xi|\le \bar{K}\}} C_T \exp \( -\frac{|z|^2}{C_T}\)f(\xi)  d\xi dz,
\end{align*}
where
\[
\mathcal{G}^2_K=\Big\{z\in \RR^{n}:\max_{1\leq i\leq n}|t^{i-\frac{1}{2}}z_i+\theta^{\omega}_i (t,\xi))|>K\Big\}.
\]
As a consequence of \eqref{bkf}, we have 
\begin{align*}
\int_{\RR^n}\int_{\{|\xi| >\bar{K}\}}C_T\exp \( -\frac{|z|^2}{C_T}\)f(\xi) d\xi dz\leq (\pi C_T)^{\frac{n}{2}}C_T\epsilon.
\end{align*}
On the other hand, by Lemma \ref{lmma}, we know that 
\begin{align*}
|\theta^{\omega}(t,\xi)|\leq  \(|\xi|+\kappa t\)e^{\kappa t}\leq \bar{K}e^{\kappa T}+\kappa Te^{\kappa T},
\end{align*}
for all $|\xi|\leq \bar{K}$ and $t\in [0,T]$. Then, we have 
\[
\mathcal{G}^2_K\subseteq\Big\{z\in \RR^{n}:\max_{1\leq i\leq n}|t^{i-\frac{1}{2}}z_i|>K-\bar{K}e^{\kappa T}-\kappa Te^{\kappa T}\Big\} =: \mathcal{G}^3_K.
\]
Therefore, for any $\epsilon>0$,  there exists $K$ sufficiently  large such that
\begin{align*}
\int_{\mathcal{G}^2_K } \int_{\{ |\xi|\le \bar{K}\}}C_T \exp \( -\frac{|z|^2}{C_T}\)f(\xi)  d\xi dz\leq \int_{\{ |\xi|\le \bar{K}\}}C_Tf(\xi)  d\xi\int_{\mathcal{G}^3_K }  \exp \( -\frac{|z|^2}{C_T}\) dz\le \epsilon.
\end{align*}
The proof of this proposition is complete.
\end{proof}

\begin{proposition}\label{plower}
Assume hypotheses (H1)-(H5). Let $f$ be a positive, continuously  integrable function on $\RR^{n}$. Then for any $K>0$ there exists $\delta >0$ such that 
\begin{align}
\inf \Big\{u^{\omega}_t(x):\max_{1\le j\le n}|x_j| \le K\Big\} \ge \delta,
\end{align}
for all $t\in [0,T]$ and  for all continuous functions $\omega$ on $[0,T]$
 with values  in $\RR^{n}$. 
\end{proposition}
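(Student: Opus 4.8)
The plan is to start from the representation $u^{\omega}_t(x)=\int_{\RR^n}p^{\omega}_t(x,\xi)f(\xi)\,d\xi$ and insert the Gaussian lower bound. Exactly as in the proof of Proposition \ref{epsilon}, the frozen coefficients $\bar\sigma(t,\cdot)=\sigma(t,\omega_t,\cdot)$ and $\bar F(t,\cdot)=F(t,\omega_t,\cdot)$ satisfy (C1), (C2'), (C3) with constants that do not depend on $\omega$, so $\theta^{\omega}$ from \eqref{determinF} is the associated deterministic flow and the constant $C_T$ of Theorem \ref{tse} is $\omega$-independent. This yields
\[
u^{\omega}_t(x)\ \ge\ \frac{1}{C_T t^{n^2/2}}\int_{\RR^n}\exp\!\left(-C_T\,|\mathcal{T}_t^{-1}(\theta^{\omega}(t,\xi)-x)|^2\right)f(\xi)\,d\xi .
\]

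First I would change variables $\eta=\theta^{\omega}(t,\xi)$. By Corollary \ref{coro} the Jacobian is at least $e^{-n\kappa T}$, and applying the first inequality of \eqref{soltheta} to $y=(\theta^{\omega})^{-1}(t,\eta)$ gives $|(\theta^{\omega})^{-1}(t,\eta)|\le e^{\kappa T}(|\eta|+\kappa T)$. Hence, restricting the integral to the fixed ball $\{|\eta|\le M\}$ with $M:=\sqrt{n}\,(1+K)$ forces $(\theta^{\omega})^{-1}(t,\eta)$ into the compact ball of radius $M':=e^{\kappa T}(M+\kappa T)$, on which the continuous positive function $f$ is bounded below by $c_0:=\min\{f(y):|y|\le M'\}>0$. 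A second change of variables $z=\mathcal{T}_t^{-1}(\eta-x)$ has Jacobian $\det\mathcal{T}_t=t^{n^2/2}$, which cancels the singular prefactor exactly, leaving
\[
u^{\omega}_t(x)\ \ge\ \frac{c_0\,e^{-n\kappa T}}{C_T}\int_{\{z:\,|\mathcal{T}_t z+x|\le M\}}e^{-C_T|z|^2}\,dz .
\]

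The last step, and the place where a little care is needed because $\mathcal{T}_t$ depends on $t$, is to bound this integral from below uniformly in $t\in(0,T]$ and in $x$ with $\max_j|x_j|\le K$ (so $|x|\le\sqrt{n}K$). Since $i-\tfrac12>0$ and $t\le T$, one has $t^{\,i-1/2}\le T^{\,i-1/2}$, so the box $B_0:=\prod_{i=1}^{n}\{|z_i|\le T^{-(i-1/2)}\}$ is contained in $\{z:t^{\,i-1/2}|z_i|\le 1\ \forall i\}$, and for $z$ in that set $|\mathcal{T}_t z|\le\sqrt{n}$, whence $|\mathcal{T}_t z+x|\le\sqrt{n}+\sqrt{n}K=M$. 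Therefore the domain of integration always contains $B_0$, and the integral is at least $\int_{B_0}e^{-C_T|z|^2}\,dz>0$, a constant depending only on $n,T,C_T$. Taking $\delta$ to be this constant times $c_0e^{-n\kappa T}/C_T$ settles $t\in(0,T]$; for $t=0$ one has $u^{\omega}_0=f\ge\min\{f(x):\max_j|x_j|\le K\}>0$ directly. The main obstacle is precisely this uniformity in the small-time regime: it is handled by noticing that the singular factor $t^{-n^2/2}$ is exactly absorbed by the scaling Jacobian and that the scaled integration domains are nested as $t$ decreases, while the uniformity in $\omega$ comes for free from the $\omega$-independence of $C_T$.
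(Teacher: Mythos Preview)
Your proof is correct and follows essentially the same strategy as the paper: insert the Gaussian lower bound for $p^{\omega}_t$, restrict the integration so that the preimage under the flow $\theta^{\omega}$ stays in a fixed compact (on which $f$ has a positive minimum), and then cancel the $t^{-n^2/2}$ prefactor against the Jacobian $\det\mathcal{T}_t$ while using Corollary~\ref{coro} to control $\det\nabla(\theta^{\omega})^{-1}$. The only cosmetic difference is the order of operations---the paper does the change of variables $y=\mathcal{T}_t^{-1}(x-\theta^{\omega}(t,\xi))$ in one step and lands on the unit cube $\{|y_i|\le 1\}$, whereas you first flow ($\eta=\theta^{\omega}(t,\xi)$), then scale ($z=\mathcal{T}_t^{-1}(\eta-x)$), and finish with the $T$-dependent box $B_0$.
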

\begin{proof}
Fix $K>0$. For any $x\in \RR$ with $|x|\leq K$. By  the lower bound of \eqref{lower0}  we get
\begin{align*}\label{domain}
u^{\omega}_t(x) &= \int_{\RR^n} p^{\omega}_t(x, \xi) f(\xi) d\xi \\
&\ge \int_{\mathcal{R} } p^{\omega}_t(x, \xi) f(\xi) d\xi \\
&\ge \frac{1}{C_Tt^{n^2/2}} \int_{\mathcal{R} } \exp \( -C_T |\mathcal{T}_t^{-1}(x - \theta^{\omega} (t, \xi))|^2\) f(\xi) d\xi,
\end{align*}
where \[\mathcal{R} =  \{ \xi\in\RR^n:  |x_1 - \theta^{\omega}_1(t, \xi)| \le \sqrt{t},\cdots,|x_n - \theta^{\omega}_n(t, \xi)| \le  t^{(2n-1)/2} , \max_{1\le j\le n}|x_j| \le K\}.\]
Due to Lemma \ref{lmma}, we know that
\[
e^{-\kappa t}|\xi|-\kappa t\leq |\theta^{\omega}(t,\xi)|,
\]
for all $t \in [0,T].$  
 Thus,     we have
\begin{align*}
|\xi_i|\leq |\xi|\leq  \(|\theta^{\omega}(t,\xi)|+\kappa t\)e^{\kappa t}\,. 
\end{align*}
For any $\xi=(\xi_1,\dots,\xi_n)\in \mathcal{R}$, it is easy to see 
\[
|\theta^{\omega}(t,\xi)|\leq \bigg[\sum_{i=1}^n\(t^{\frac{2i-1}{2}}+|x_i|\)^2\bigg]^{\frac{1}{2}}\leq \sqrt{2}|x|+\sqrt{2n}(T^{\frac{n-1}{2}}+1).
\]
This means that 
\begin{align}\label{rr1}
\mathcal{R} \subseteq \mathcal{R}^1 :=\{\xi \in \RR^n : |\xi|\leq (\sqrt{2}nK+\sqrt{2}n(T^{\frac{n-1}{2}}+1)+\kappa T)e^{\kappa T} \}.
\end{align}
Recall   that $f$ is a continuous positive integrable function. Hence, there exists $\delta>0$ such that $f(\xi)\geq \delta$ on the set $\mathcal{R}^1\supseteq \mathcal{R}$. As a consequence,  for any $x\in \RR^n$ with $|x|\leq K$, we have 
\begin{align*}
u^{\omega}_t(x) &\ge \frac{\delta}{C_Tt^{n^2/2}}\int_{\mathcal{R} }  \exp \( -C_T |\mathcal{T}_t^{-1}(x - \theta^{\omega}(t, \xi))|^2\)  d\xi.
\end{align*}
By change of variable $\mathcal{T}_t^{-1}(x - \theta^{\omega}(t, \xi))=y$ and then 
by Corollary \ref{coro}, we have 
\begin{align*}
u^{\omega}_t(x)&\ge \frac{\delta}{C_T} \int_{\{ y\in\RR^n:  |y_i| \le 1, i=1,\dots n \}}  \exp\( -C_T  |y|^2 \) \det \(\nabla (\theta^{\omega})^{-1}\(t, x-\mathcal{T}_t(y)\)\)dy \\
&\ge \frac{\delta}{C_T}  e^{-(nC_T+n\kappa T)}\int_{\{ y\in\RR^n:  |y_i| \le 1, i=1,\dots n \}}dy \nonumber \\
&= \frac{\delta}{C_T} 2^n e^{-(nC_T+n\kappa T)}, \nonumber 
\end{align*}
which completes the proof of the proposition.
\end{proof}

Combining the Propositions \ref{epsilon} and \ref{plower}, we arrive at the following 
result. 
\begin{proposition}\label{3conditions}
 Assume hypotheses (H1)-(H5) and that    $\omega:[0,T]\to \RR^n$
 is a   continuous function. Let $u^{\omega}$ be the solution to \eqref{omega_pde} with initial condition $f \in C(\RR^n) \cup L^1(\RR^n)$.    For any $\alpha \in (0,1)^n$ and $t\in [0,T]$, let $\hat{\omega}^{\alpha}=\hat{\omega}=(\hat{\omega}_1, \cdots, \hat{\omega}_n)=Q_{\alpha}(u^{\omega}_t)$ be the $\alpha$-quantile of $u^{\omega}_t$. Then, there exist  $K, \delta, \ep >0$, independent of $t$ and $\omega$, such that
\begin{align}\label{con1}
\int_{\big\{\displaystyle x \in \RR^n: \max_{1\le j\le n}|x_j| \ge K\big\}} |u^{\omega}_t(x)|dx \le \varepsilon,
\end{align}
\begin{align}\label{con2}
\max_{1\le j\le n}|\hat{\omega}_j| \le K,
\end{align}
and
\begin{align}\label{con3}
\inf\Big\{ u^{\omega}_t(x): \max_{1\le j\le n}|x_j| \le K\Big\} \ge \delta.
\end{align}
Fix $\alpha\in (0,1)^n$ and $K,\delta,\varepsilon>0$. Denote by $\mathcal{S}=\mathcal{S}_{\alpha, K,\delta,\varepsilon}$ the collection of density functions $h$ on $\RR^{n}$ such that
\begin{align}\label{sets}
\int_{\big\{\displaystyle x \in \RR^n: \max_{1\le j\le n}|x_j| \ge K\big\}} |h|dx \le \varepsilon,\ Q_{\alpha}(h)\leq K,\ \mathrm{and}\ \inf\Big\{ h(x): \max_{1\le j\le n}|x_j| \le K\Big\} \ge \delta.
\end{align}
Then, $\mathcal{S}$ is a   convex set.
\end{proposition}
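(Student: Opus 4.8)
The plan is to prove the three inequalities \eqref{con1}--\eqref{con3} and then separately verify convexity of $\mathcal{S}$. The first assertion is essentially a bookkeeping exercise: Proposition \ref{epsilon} gives, for any prescribed $\varepsilon>0$, a constant $K_1>0$ so that $\int_{\mathcal{G}_{K_1}}|u^\omega_t(x)|\,dx\le\varepsilon$ uniformly in $t\in[0,T]$ and in the continuous function $\omega$; this is exactly \eqref{con1} with $K=K_1$. For \eqref{con3}, Proposition \ref{plower} provides, for that same $K_1$, a $\delta>0$ with $\inf\{u^\omega_t(x):\max_j|x_j|\le K_1\}\ge\delta$ uniformly in $t$ and $\omega$. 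The only point requiring a short argument is \eqref{con2}: I would bound the $\alpha$-quantile $\hat\omega=Q_\alpha(u^\omega_t)$ in terms of $K_1$ using \eqref{con1}. Indeed, for each coordinate $j$, if $\max_j|x_j|\ge K$ carries $u^\omega_t$-mass at most $\varepsilon$, then choosing $\varepsilon<\min_j(1-\alpha_j)\wedge\min_j\alpha_j$ from the outset forces the $\alpha_j$-quantile of the $j$th marginal to lie in $[-K,K]$: the marginal cdf at $-K$ is at most $\varepsilon<\alpha_j$ and at $K$ is at least $1-\varepsilon>\alpha_j$. So I first fix such a small $\varepsilon$, then invoke Proposition \ref{epsilon} to get $K$, then Proposition \ref{plower} to get $\delta$, and all three bounds hold with constants independent of $t$ and $\omega$ as claimed.

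For the convexity of $\mathcal{S}=\mathcal{S}_{\alpha,K,\delta,\varepsilon}$, fix $h_1,h_2\in\mathcal{S}$ and $\lambda\in[0,1]$, and set $h=\lambda h_1+(1-\lambda)h_2$. That $h$ is a density (nonnegative, integrates to $1$) is immediate by linearity of the integral. The tail bound is preserved since
\[
\int_{\{\max_j|x_j|\ge K\}}|h|\,dx=\lambda\int_{\{\max_j|x_j|\ge K\}}|h_1|\,dx+(1-\lambda)\int_{\{\max_j|x_j|\ge K\}}|h_2|\,dx\le\lambda\varepsilon+(1-\lambda)\varepsilon=\varepsilon.
\]
Likewise the lower bound is preserved pointwise: on $\{\max_j|x_j|\le K\}$ we have $h(x)\ge\lambda\delta+(1-\lambda)\delta=\delta$. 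The one nontrivial closure condition is $Q_\alpha(h)\le K$, i.e.\ $\max_j(Q_\alpha(h))_j\le K$. I would argue marginal by marginal: the $j$th marginal cdf of $h$ is the convex combination $\lambda F^{(j)}_{h_1}+(1-\lambda)F^{(j)}_{h_2}$ of the corresponding marginal cdfs. Since $(Q_\alpha(h_i))_j\le K$ means $F^{(j)}_{h_i}(K)\ge\alpha_j$ for $i=1,2$ (using right-continuity of cdfs and the definition of the quantile as an infimum, so that the cdf at the quantile value is at least $\alpha_j$; one must handle the borderline case, but $F^{(j)}_{h_i}(K)\ge F^{(j)}_{h_i}((Q_\alpha(h_i))_j)\ge\alpha_j$ since $K$ is an upper bound for the quantile), we get $F^{(j)}_h(K)=\lambda F^{(j)}_{h_1}(K)+(1-\lambda)F^{(j)}_{h_2}(K)\ge\alpha_j$, hence $(Q_\alpha(h))_j=\inf\{y:F^{(j)}_h(y)\ge\alpha_j\}\le K$. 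Taking the maximum over $j$ gives $Q_\alpha(h)\le K$, so $h\in\mathcal{S}$.

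The main (and really only) obstacle is the handling of the quantile in two places: deriving \eqref{con2} from the tail estimate, and showing the quantile constraint is preserved under convex combinations. Both hinge on the elementary but slightly delicate fact that $(Q_\alpha(\mu))_j\le K$ is equivalent to the $j$th marginal cdf satisfying $F^{(j)}_\mu(K)\ge\alpha_j$ — this uses right-continuity of cdfs and care with the infimum in the definition of $Q_\alpha$ when the cdf has a flat stretch or a jump exactly at level $\alpha_j$. Once that equivalence is stated cleanly, the tail bound yields $F^{(j)}(K)\ge 1-\varepsilon>\alpha_j$ for each $j$ after the initial choice $\varepsilon<\min_j(1-\alpha_j)$, and the convexity of cdfs-as-functions-of-the-density does the rest. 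Everything else is routine: monotonicity and linearity of the integral, plus direct appeals to Propositions \ref{epsilon} and \ref{plower} whose constants are already known to be uniform in $t$ and $\omega$.
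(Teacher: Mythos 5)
Your proposal is correct and follows essentially the same route as the paper: fix $\varepsilon<\min_j(\alpha_j\wedge(1-\alpha_j))$ first, pull $K$ from Proposition \ref{epsilon} and $\delta$ from Proposition \ref{plower}, deduce \eqref{con2} from the tail bound on the marginals, and verify convexity by linearity of the tail integral, the pointwise lower bound, and the marginal cdf condition $F^{(j)}_h(K)\ge\alpha_j$ characterizing $(Q_\alpha(h))_j\le K$. Your explicit remark on the equivalence between the quantile bound and the cdf inequality (via right-continuity) is a slightly cleaner justification than the paper's, but the substance is identical.
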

\begin{proof}
Choose 
\[
0<\varepsilon < \min\( \alpha_1, \cdots, \alpha_n, 1-\alpha_1, \cdots, 1-\alpha_n\).
\]
Then, by Proposition \ref{epsilon}, there exists $K>0$ such that   \eqref{con1} is true. Inequality \eqref{con2} also holds true, due to the fact that
\[
\int_{-\infty}^{-K} dx_j  \int_{\RR^{n-1}}u^{\omega}_t(x) \prod_{i\ne j, 1\le i \le n}dx_i \leq \int_{\{x \in \RR^n: \max_{1\le j\le n}|x_j| \ge K\}} |u^{\omega}_t(x)|dx \le \varepsilon\leq \alpha_j,
\]
and
\[
\int_{K}^{\infty} dx_j  \int_{\RR^{n-1}}u^{\omega}_t(x)  \prod_{i\ne j, 1\le i \le n}dx_i\leq 1-\alpha_j,
\]
for all $j=1,\dots, n$.    \eqref{con3} is a straightforward  consequence  of Proposition \ref{plower}.  

In the next step, we prove the convexity of set $\mathcal{S}$. Let $h_1, h_2\in \mathcal{S}$. For any $\beta\in [0,1]$, $h=\beta h_1+(1-\beta) h_2$ is still a density function, and the first and the last properties in \eqref{sets} are trivial for $h$. It suffices to show that for any $\beta\in [0,1]$,
\[
Q_{\alpha}(\beta h_1+(1-\beta)h_2)\leq K,
\]
which is true, because
\begin{align*}
&\int_{-\infty}^{-K} dx_j  \int_{\RR^{n-1}}(\beta h_1(x)+(1-\beta)h_2(x))  \prod_{i\ne j, 1\le i \le n}dx_i\\
=&\beta\int_{-\infty}^{-K} dx_j  \int_{\RR^{n-1}} h_1(x) \prod_{i\ne j, 1\le i \le n}dx_i +(1-\beta)\int_{-\infty}^{-K} dx_j  \int_{\RR^{n-1}}h_2(x)  \prod_{i\ne j, 1\le i \le n}dx_i\\
\leq &\beta \alpha_j +(1-\beta)\alpha_j=\alpha_j,
\end{align*}
and
\begin{align*}
\int_K^{\infty} dx_j  \int_{\RR^{n-1}}(\beta h_1(x)+(1-\beta)h_2(x)) \prod_{i\ne j, 1\le i \le n}dx_i\leq 1-\alpha_j,
\end{align*}
for all $j=1,\dots, n$. The proof of this proposition is completed. 
\end{proof} 

The next Feynman-Kac formula for time-inhomogeneous PDE  is cited from    
\cite[page 131-132]{mark1985functional}. Consider the following PDE,
\begin{align}\label{cauchy}
\begin{cases}
&\frac{\partial}{\partial t}u_t(x)= \frac{1}{2} \sum_{i,j=1}^{n} a_{ij} (t,x) \frac{\partial^2}{\partial x_i\partial x_j} u_t(x) + \langle b(t,x), \nabla u_t(x)\rangle+ c(t,x) u_t(x)\,,  \\
&u_0(x) = f(x)\,.
\end{cases}
\end{align}
Let $t>0$ and $x\in \RR^n$  and let  the process $(X_s^{t,x})_{0\leq s\leq t}$ be  the solution to the  following stochastic differential  equation:
\begin{align}\label{fk}
\begin{cases}
dX_s^{t,x} = \sigma\(t-s, X_s^{t,x}\)dW_s + b\(t-s, X_s^{t,x}\)ds,\quad 0\leq s\leq t,\\
 X_0^{t,x} = x\,. 
 \end{cases}
\end{align}
Then, we have the following Feynman-Kac formula for the solution to \eqref{cauchy}. 
\begin{theorem}[see \cite{mark1985functional}]\label{thmfk}
Assume that the entries of the matrix $\sigma(t,x)$ are continuous and bounded on the set   $[0,\infty)\times \RR^n$ and Lipschitz continuous in $x$ with a Lipschitz constant which does not depend on $t$.  The vector $b(t,x)$ are also assumed to be  continuous, 
and Lipschitz continuous in $x$ with a Lipschitz constant which does not depend on $t$.
Let $u$ be the solution to  \eqref{cauchy}. Suppose that $u$ is continuous and bounded on $[0,T] \times \RR^n$.
Suppose further that the time derivative of $u$ and its spatial derivatives up to order two are bounded and continuous in the region $(h<t<T, x \in \RR^{n})$ for every $h \in (0,T)$. Then,  for any $t\in [0,T]$ and $x\in \RR$, $u_t(x)$ can be represented in the form
\begin{align}\label{Feynman}
u_t(x) = \EE f\(X_t^{t,x}\) \exp\( \int_0^t c(t-s, X_s^{t,x})ds\),
\end{align}
where $X_s^{t,x}$ is  given  by \eqref{fk}.
\end{theorem}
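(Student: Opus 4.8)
\textbf{Proof proposal for Theorem \ref{thmfk}.}
The plan is the classical verification argument: fix $(t,x)\in (0,T]\times\RR^n$, run the reversed-coefficient diffusion \eqref{fk} (which has a unique strong solution on $[0,t]$ by the stated boundedness and uniform-in-time Lipschitz hypotheses on $\sigma$ and $b$), and show that the candidate martingale obtained by composing $u$ with this diffusion and multiplying by the exponential weight is indeed a martingale whose endpoint values give \eqref{Feynman}. Concretely, for $h\in(0,t)$ define on $s\in[0,t-h]$ the process
\[
M_s \;=\; u_{t-s}\bigl(X_s^{t,x}\bigr)\,\exp\!\Bigl(\int_0^s c\bigl(t-r,X_r^{t,x}\bigr)\,dr\Bigr).
\]
On $[0,t-h]$ the time argument $t-s$ stays in $[h,t]$, a region on which, by hypothesis, $\partial_t u$ and the spatial derivatives of $u$ up to order two are bounded and continuous, so \Itos formula applies to $M_s$.

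The key computation: writing $E_s=\exp(\int_0^s c(t-r,X_r^{t,x})\,dr)$, the chain rule contributes $-(\partial_t u)_{t-s}(X_s^{t,x})\,ds$ from the time dependence of $u_{t-s}$, the \Ito term contributes $\langle\nabla u_{t-s}(X_s^{t,x}),b(t-s,X_s^{t,x})\rangle\,ds+\tfrac12\sum_{i,j}a_{ij}(t-s,X_s^{t,x})\partial_i\partial_j u_{t-s}(X_s^{t,x})\,ds$ plus the stochastic integral $\langle\nabla u_{t-s}(X_s^{t,x}),\sigma(t-s,X_s^{t,x})\,dW_s\rangle$ (using $\sigma\sigma^*=a$), and the weight $E_s$ contributes $c(t-s,X_s^{t,x})u_{t-s}(X_s^{t,x})\,ds$. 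Since $u$ solves \eqref{cauchy}, the drift of $u_{t-s}(X_s^{t,x})$ is exactly $-c(t-s,X_s^{t,x})u_{t-s}(X_s^{t,x})\,ds$, and this cancels against the $dE_s$ contribution, so that
\[
dM_s \;=\; E_s\,\bigl\langle \nabla u_{t-s}(X_s^{t,x}),\,\sigma(t-s,X_s^{t,x})\,dW_s\bigr\rangle,\qquad 0\le s\le t-h.
\]
Hence $M$ is a local martingale on $[0,t-h]$; because $E_s$ is bounded by $e^{T\|c\|_\infty}$ and $\nabla u$, $\sigma$ are bounded on the relevant region (localizing by an exit time of $X^{t,x}$ from a large ball first, if one prefers to be fully rigorous), the integrand is square-integrable and $M$ is a genuine martingale there. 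Therefore $\EE[M_{t-h}]=M_0=u_t(x)$.

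Finally, let $h\downarrow0$. By path-continuity of $X^{t,x}$ and continuity of $u$ on $[0,T]\times\RR^n$ (in particular $u_{s}\to u_0=f$ as $s\downarrow0$), $M_{t-h}\to f(X_t^{t,x})\exp(\int_0^t c(t-s,X_s^{t,x})\,ds)$ almost surely; since $|M_{t-h}|\le\|u\|_\infty e^{T\|c\|_\infty}$, dominated convergence gives $\EE[M_{t-h}]\to\EE f(X_t^{t,x})\exp(\int_0^t c(t-s,X_s^{t,x})\,ds)$, which equals $u_t(x)$ and is precisely \eqref{Feynman}. The only delicate point is the behaviour near $s=t$, where the hypotheses grant no control on the derivatives of $u$: this is exactly why one works on $[0,t-h]$ and recovers the endpoint by the continuity-and-domination limit rather than by applying \Itos formula directly on all of $[0,t]$; the case $t=T$, if the regularity region is taken open at $T$, is handled by a further harmless limit $t\uparrow T$ using continuity of both sides in $t$.
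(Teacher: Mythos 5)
The paper offers no proof of this theorem: it is quoted verbatim from Freidlin \cite{mark1985functional} (pp.~131--132), so there is nothing internal to compare against. Your verification argument is the standard proof of such time-inhomogeneous Feynman--Kac representations and it is correct: applying \Itos formula to $M_s=u_{t-s}(X_s^{t,x})\exp(\int_0^s c(t-r,X_r^{t,x})\,dr)$ on $[0,t-h]$, the PDE \eqref{cauchy} makes the finite-variation part vanish (the $-cu$ drift of $u_{t-s}(X_s^{t,x})$ cancels the contribution of the exponential weight), the boundedness of $\nabla u$ and $\sigma$ on the region $(h,T)\times\RR^n$ makes $M$ a true martingale there, and the passage $h\downarrow 0$ via continuity and boundedness of $u$ plus dominated convergence recovers \eqref{Feynman}. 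Your handling of the endpoint $s=t$, where the hypotheses give no derivative control, is exactly the right way to close the argument.

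Two small remarks. First, your bound $E_s\le e^{T\|c\|_\infty}$ uses boundedness of $c$, which is not listed among the explicit hypotheses of the theorem as stated; it is, however, clearly intended (the paper's surrounding remarks stress that (H4) forces $c$ to be bounded precisely so that this theorem applies), so this is a presentational gap in the statement rather than in your proof. Second, the localization by an exit time that you mention parenthetically is indeed unnecessary here, since the hypotheses give global-in-$x$ boundedness of the spatial derivatives on $(h,T)\times\RR^n$; the stochastic integral already has a bounded integrand.
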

\begin{remark}
Note that the Hypotheses (H1)-(H5) imposed on coefficients $F$ and $\sigma$ imply all the conditions required in Theorem \ref{thmfk}. In other words, the density of $X^{\omega}$ of SDE \eqref{auxiliaryeq} can be represented by \eqref{Feynman} under Hypotheses (H1)-(H5).
\end{remark}

\begin{proposition}\label{hypie}
Assume  hypotheses (H1)-(H5). Let 
$u^{\omega}$ be the solution to \eqref{omega_pde} with initial condition $f$ satisfying hypothesis (I). Then, we have  for any $t_0>0$, 
\begin{align}\label{unbd}
U':=\sup_{\substack{t\in[ t_0,T]\\ \omega\in C_b([0,T];\RR^n)}}&\bigg(\int_{\RR^{n} } \sup_{|y|\geq r} u^{\omega}_t(y)^2 \(|r|^{{4n}+\ep-1} +r^{n-1}\)  dr \nonumber\\
&+\int_0^{\infty} \Big[\sup_{|z|\geq \lambda}|\nabla u^{\omega}_t(z)|^4\Big]\(\lambda^{4n-1+\ep} +\lambda^{n-1}\) d\lambda \bigg)<\infty.
\end{align}
\end{proposition}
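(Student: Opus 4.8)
The plan is to reduce \eqref{unbd} to two pointwise estimates, uniform in $\omega\in C_b([0,T];\RR^n)$ and $t\in[t_0,T]$: a bound for $\sup_{|y|\ge r}u^\omega_t(y)$ in terms of the decay of $f$, and a bound for $\sup_{|z|\ge\lambda}|\nabla u^\omega_t(z)|$ in terms of the decay of $f$ and $\nabla f$. I first note that (I) together with $f\in C^1$ forces $\|f\|_\infty<\infty$ and $\|\nabla f\|_\infty<\infty$, since otherwise $\sup_{|z|\ge r}|f(z)|$ (resp. $\sup_{|z|\ge r}|\nabla f(z)|$) would be infinite for all $r$ and the corresponding integral in \eqref{hypi} would diverge; these facts are used throughout. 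For the first integral in \eqref{unbd} I start from $u^\omega_t(y)=\int_{\RR^n}p^\omega_t(y,\xi)f(\xi)\,d\xi$ and apply Theorem \ref{tse} to $\bar\sigma(t,\cdot)=\sigma(t,\omega_t,\cdot)$, $\bar F(t,\cdot)=F(t,\omega_t,\cdot)$, which is legitimate with an $\omega$-independent constant $C_T$ by Remark \ref{imply} (this is the reasoning already used in the proof of Proposition \ref{epsilon}). Absorbing $t^{-n^2/2}\le t_0^{-n^2/2}$ and using $|\mathcal{T}_t^{-1}v|^2\ge c_T|v|^2$ for $t\le T$, then changing variables $\eta=\theta^\omega(t,\xi)$ (Jacobian controlled by Corollary \ref{coro}) and using the lower bound $|\xi|\ge e^{-\kappa T}|\eta|-\kappa T$ from \eqref{soltheta}, I split the resulting Gaussian integral over $\{|\eta|\ge|y|/2\}$, where it integrates out and leaves a constant times $\sup_{|z|\ge e^{-\kappa T}|y|/2-\kappa T}|f(z)|$, and over $\{|\eta|<|y|/2\}$, where $|\eta-y|\ge|y|/2$ produces a factor $e^{-c|y|^2}$ against a polynomial volume. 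This gives $\sup_{|y|\ge r}u^\omega_t(y)\le C\bigl(\sup_{|z|\ge e^{-\kappa T}r/2-\kappa T}|f(z)|+e^{-cr^2}\bigr)$; squaring, multiplying by $r^{4n-1+\ep}+r^{n-1}$, integrating, and substituting $s=e^{-\kappa T}r/2-\kappa T$ bounds the main term by the first integral in \eqref{hypi} up to corrections controlled by $\|f\|_\infty$, the rest being a convergent Gaussian integral.

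For the second integral in \eqref{unbd} I would use the time-inhomogeneous Feynman--Kac formula (Theorem \ref{thmfk}, applicable by the remark following it), which gives $u^\omega_t(x)=\EE\bigl[f(X^{t,x}_t)\exp\bigl(\int_0^t c(t-s,\omega_{t-s},X^{t,x}_s)\,ds\bigr)\bigr]$, where $X^{t,x}$ solves \eqref{fk} with the degenerate diffusion $D\sigma(t-s,\omega_{t-s},\cdot)$ and drift $b(t-s,\omega_{t-s},\cdot)$. Since (H1)--(H4) make $c$ bounded and $b,\sigma,c$ Lipschitz in space, differentiating in $x$ yields
\[
\nabla u^\omega_t(x)=\EE\Bigl[\Bigl(\nabla f(X^{t,x}_t)\,J_t+f(X^{t,x}_t)\!\int_0^t(\nabla_x c)(t-s,\omega_{t-s},X^{t,x}_s)\,J_s\,ds\Bigr)\exp\Bigl(\int_0^t c(t-s,\omega_{t-s},X^{t,x}_s)\,ds\Bigr)\Bigr],
\]
where $J_s=\nabla_x X^{t,x}_s$ solves the linearized SDE and satisfies $\sup_{s\le T}\EE|J_s|^2\le C$ uniformly in $x,\omega,t$. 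Cauchy--Schwarz then gives $|\nabla u^\omega_t(x)|\le C\bigl((\EE|\nabla f(X^{t,x}_t)|^2)^{1/2}+(\EE|f(X^{t,x}_t)|^2)^{1/2}\bigr)$, so it remains to show $X^{t,x}_t$ is unlikely to be small when $|x|$ is large. Comparing $X^{t,x}_\cdot$ with the deterministic flow $\bar\theta$ of its drift $b$ — for which $|\bar\theta_t|\ge e^{-\kappa T}|x|-C$ by the Gronwall argument of Lemma \ref{lmma} — and using $\EE\sup_{s\le T}|X^{t,x}_s-\bar\theta_s|^p\le\Gamma_p$ uniformly in $x,\omega,t$ (Gronwall again, the stochastic part being bounded in $L^p$ via $|\sigma|^2\le\Lambda$), Chebyshev gives $\PP(|X^{t,x}_t|\le c_0|x|)\le C_p|x|^{-p}$ for $|x|\ge R_0(p)$, with $c_0=\tfrac12 e^{-\kappa T}$ and $p$ arbitrary. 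Splitting $\EE|\nabla f(X^{t,x}_t)|^2$ and $\EE|f(X^{t,x}_t)|^2$ according to whether $|X^{t,x}_t|$ exceeds $c_0|x|$ then yields $|\nabla u^\omega_t(x)|\le C\bigl(\sup_{|z|\ge c_0|x|}|\nabla f(z)|+\sup_{|z|\ge c_0|x|}|f(z)|+|x|^{-p/2}\bigr)$ for $|x|\ge R_0$, and $|\nabla u^\omega_t(x)|\le C(\|\nabla f\|_\infty+\|f\|_\infty)$ for $|x|\le R_0$.

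To finish, I substitute these bounds into \eqref{unbd}. Since $\sup_{|z|\ge\lambda}|\nabla u^\omega_t(z)|$ is bounded on $[0,R_0]$ and at most $C\bigl(\sup_{|z|\ge c_0\lambda}|\nabla f(z)|+\sup_{|z|\ge c_0\lambda}|f(z)|+\lambda^{-p/2}\bigr)$ for $\lambda\ge R_0$, raising to the fourth power, multiplying by $\lambda^{4n-1+\ep}+\lambda^{n-1}$ and rescaling $\mu=c_0\lambda$ bounds the $\nabla f$-term by a constant times the second integral in \eqref{hypi}, bounds the $f$-term by $\|f\|_\infty^2$ times the first integral in \eqref{hypi} (using $\sup|f|^4\le\|f\|_\infty^2\sup|f|^2$), and leaves the $\lambda^{-p/2}$-term, which is integrable against the polynomial weight as soon as $2p>4n+\ep$. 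Choosing such a $p$ fixes $R_0$, $\Gamma_p$ and all constants, none depending on $\omega$ or on $t\in[t_0,T]$; together with the estimate for the first integral obtained above, this gives $U'<\infty$.

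The step I expect to be the main obstacle is the rigorous differentiation of the Feynman--Kac representation in $x$ under only (H1)--(H5): it requires differentiability (or at least uniformly Lipschitz difference quotients) of the flow $x\mapsto X^{t,x}_s$ for the degenerate SDE \eqref{fk} and enough regularity of $c$ to differentiate $\exp(\int_0^t c\,ds)$, whereas (H3)--(H4) only supply Lipschitz rather than $C^1$ regularity; the natural remedy is to mollify the coefficients, run the argument for the smooth approximations, and pass to the limit using the uniform bounds above. A secondary, more computational, point is to verify that the constants in the concentration estimate $\PP(|X^{t,x}_t|\le c_0|x|)\le C_p|x|^{-p}$ and in the $L^p$-comparison with the deterministic skeleton are genuinely uniform in $\omega$ and $t$, which holds precisely because all the relevant Lipschitz and ellipticity constants are.
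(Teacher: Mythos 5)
Your proposal is correct, and its backbone coincides with the paper's: both start from the time-inhomogeneous Feynman--Kac representation of Theorem \ref{thmfk}, differentiate it in $x$, control the Jacobian $\nabla_x X^{t,x}_s$ in $L^2$ via Burkholder--Davis--Gundy and Gronwall, and thereby reduce matters to bounding $\EE|\nabla f(X^{t,x}_t)|^2$ and $\EE|f(X^{t,x}_t)|^2$; your treatment of the $u^2$-integral through the Gaussian upper bound of Theorem \ref{tse} and the flow $\theta^\omega$ is likewise the paper's strategy. Where you genuinely diverge is in converting $\EE|\nabla f(X^{t,x}_t)|^2$ into decay in $|x|$: the paper writes this expectation against the transition density, invokes the Gaussian estimate a second time, changes variables through $\theta^\omega$ and splits around a fixed radius $\delta$, so that the weights of hypothesis (I) are matched exactly; you instead compare $X^{t,x}$ with the deterministic skeleton of its drift, obtain the concentration bound $\PP(|X^{t,x}_t|\le c_0|x|)\le C_p|x|^{-p}$ by Chebyshev, and kill the resulting $\lambda^{-2p}$ tail by choosing $2p>4n+\ep$. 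Your route is more elementary at this step --- it needs only boundedness of $\sigma$ and the Lipschitz drift rather than density estimates --- at the price of requiring $\|f\|_\infty,\|\nabla f\|_\infty<\infty$ (which you correctly deduce from (I) plus continuity) and of introducing the auxiliary parameter $p$ and threshold $R_0$; the paper's density route avoids these but leans harder on Theorem \ref{tse}. In both cases uniformity in $\omega$ and $t\in[t_0,T]$ holds for the same reason, namely that all Lipschitz and ellipticity constants are $\omega$-free. Finally, the obstacle you flag --- justifying the differentiation of the Feynman--Kac functional when the coefficients are only Lipschitz --- is genuine, but the paper performs exactly the same formal differentiation without comment, so carrying out your mollification step would, if anything, make the argument more complete than the published one.
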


\begin{proof}
We shall  show that the second term in \eqref{unbd} is uniformly bounded.  The uniform boundedness of the first term can be proved in a similar way.  Using Theorem \ref{thmfk}, for any $t\in [0,T]$ and $x\in\RR$, we can write
\begin{align*}
u^{\omega}_t(x) = \EE \(f(X_{t}^{\omega,t,x})\exp\(\int_0^t c \(t-s, \omega_{t-s}, X_{s}^{\omega,t,x}\) ds \)\),
\end{align*}
where $X^{\omega, t,x}$ is the solution to \eqref{fk},   where $\sigma(t,x)$ and $b(t,x)$ are replaced by  $\sigma(t, \om(t), x)$ and  $F(t, \om(t), x)$. 
Differentiating this expression with respect to  $x$,  we have
\begin{align*}
\nabla u^{\omega}_t(x) = &\EE \bigg[\exp\(\int_0^t c \(t-s, \omega,  X_{s}^{\omega,t,x}\) ds \) \nabla f(X_{t}^{\omega,t,x}) \nabla X_{t}^{\omega,t,x} \\
&+ f(X_{t}^{\omega,t,x}) \exp\(\int_0^t c \(t-s, \omega_{t-s},  X_{s}^{\omega,t,x}\) ds \)\\
&\int_0^t  \nabla c \(t-s, \omega_{t-s},  X_{s}^{\omega,t,x}\)\nabla X_{s}^{\omega,t,x}  ds \bigg].
\end{align*}
Due to hypotheses (H4), we know that $c$, $\nabla c$ are both bounded functions. By Cauchy-Schwarz's and Minkowski's inequalities, we can show that
\begin{align}\label{unbd1}
\big|\nabla u^{\omega}_t(x)\big| 
\le& C \Big[\big\| |\nabla f(X_{t}^{\omega,t,x})| \big\|_2 \big\| |\nabla X_{t}^{\omega,t,x} |\big\|_2 +  \big\|f(X_{t}^{\omega,t,x})\big\|_2  \int_0^t \big\||\nabla X_{s}^{\omega,t,x}|\big\|_2ds.
\end{align}
Note that for any $r\in (0,t)$, $\nabla X_r^{\omega,t,x}$ satisfies the following equation
\begin{align*}
\nabla X_r^{\omega,t,x} =& I_n+ \int_0^r \nabla (D\sigma)\(t-s, \omega_{t-s}, X_s^{\omega,t,x}\)\nabla X_s^{\omega,t,x}  dW_s \\
&+ \int_0^r \nabla F\(t-s, \omega_{t-s},X_s^{\omega,t,x}\) \nabla X_s^{\omega,t,x} ds.
\end{align*}
From Burkholder-Davis-Gundy's  and Jensen's and Minkowski's inequalities it follows 
that
\begin{align*}
\big\| |\nabla X_r^{\omega,t,x}|\big\|^2_2 \le & n+T^{\frac{1}{2}}\int_0^r \big\||\nabla F\(t-s, \omega_{t-s},X_s^{\omega,t,x}\) \nabla X_s^{\omega,t,x}| \big\|^2_2ds\\
&+  \int_0^r \big\|| \nabla (D\sigma)\(t-s, \omega_{t-s}, X_s^{\omega,t,x}\)\nabla X_s^{\omega,t,x}| \big\|^2_2 ds \\
\le &n+ C(\kappa,T) \int_0^r  \big\||\nabla X_s^{\omega,t,x}|\big\|^2_2 ds.
\end{align*}
By Gronwall's inequality, we obtain
\[
\big\||\nabla X_r^{\omega,t,x}|\big\|_2 \le \sqrt{ne^{C(\kappa,T)r}}\le \sqrt{ne^{C(\kappa,T)T}}.
\]
Inserting this inequality into  \eqref{unbd1},   we  obtain 
\begin{align*}
\big|\nabla u^{\omega}_t(x)\big| \le & C(\kappa, T) \big(\big\| |\nabla f(X_{t}^{\omega,t,x})| \big\|_2 + \big\|f(X_{t}^{\omega,t,x})\big\|_2 \big).
\end{align*}
This implies 
\begin{align}\label{deriofut}
&\int_0^{\infty} \Big[\sup_{|z|\geq \lambda}|\nabla u^{\omega}_t(z)|^4\Big]\(\lambda^{4n-1+\ep} +\lambda^{n-1}\) d\lambda\nonumber\\
\leq &C\int_0^{\infty} \Big[\sup_{|z|\geq \lambda}\big(\big\| |\nabla f(X_{t}^{\omega,t,z})| \big\|_2 + \big\|f(X_{t}^{\omega,t,z})\big\|_2 \big)^4\Big]\(\lambda^{4n-1+\ep} +\lambda^{n-1}\) d\lambda\nonumber\\
\leq &C\Bigg(\int_0^{\infty} \bigg[\sup_{|z|\geq \lambda} \Big(\int_{\RR^n}|\nabla f(x)|^2p_t^{\omega}(x,z)dx\Big)^2\bigg]\(\lambda^{4n-1+\ep} +\lambda^{n-1}\) d\lambda\nonumber\\
&\quad+\int_0^{\infty} \bigg[\sup_{|z|\geq \lambda} \Big(\int_{\RR^n}| f(x)|^2p_t^{\omega}(x,z)dx\Big)^2\bigg]\(\lambda^{4n-1+\ep} +\lambda^{n-1}\) d\lambda\Bigg)\nonumber\\
:=&C(D_1+D_2),
\end{align}
where $p^{\omega}_t(\cdot,\xi)$ is the probability density of the solution to \eqref{auxiliaryeq} with initial condition $X^{\omega}_0=\xi\in \RR^n$.  Applying Theorem \ref{tse} and Jensen's inequality, we can show that 
\begin{align}\label{d1}
D_1\leq &C\int_0^{\infty} \bigg[\sup_{|z|\geq \lambda} \int_{\RR^n}|\nabla f(x)|^4\frac{C_T}{t^{n^2/2}}\exp \( -\frac{ |\mathcal{T}_t^{-1}(\theta^{\omega}(t, z) - x)|^2}{C_T}\) dx\bigg]\(\lambda^{4n-1+\ep} +\lambda^{n-1}\) d\lambda.
\end{align}
Now that hypothesis (I) implies that 
\begin{align}\label{nf1}
\int_{\RR^n}|\nabla f(x)|^4dx\leq \int_0^{\infty}\sup_{|x|\geq \lambda}|\nabla f(x)|^4\lambda^{n-1}d\lambda \leq U
\end{align}
and
\begin{align}\label{nf2}
 \sup_{|x|\geq \delta}|\nabla f(x)|\leq \delta^{-1}\int_0^{\delta}\sup_{|x|\geq \lambda}|\nabla f(x)|d\lambda\leq \delta^{-1}U,
\end{align}
for any $\delta>0$.
Using \eqref{nf1}  we obtain 
\begin{align*}
&\int_{\RR^n}|\nabla f(x)|^4\frac{C_T}{t^{n^2/2}}\exp \( -\frac{ |\mathcal{T}_t^{-1}(\theta^{\omega}(t, z) - x)|^2}{C_T}\) dx\\
= &\int_{|x|\leq \delta}|\nabla f(x)|^4\frac{C_T}{t^{n^2/2}}\exp \( -\frac{ |\mathcal{T}_t^{-1}(\theta^{\omega}(t, z) - x)|^2}{C_T}\) dx\\
&+\int_{|x|> \delta}|\nabla f(x)|^4\frac{C_T}{t^{n^2/2}}\exp \( -\frac{ |\mathcal{T}_t^{-1}(\theta^{\omega}(t, z) - x)|^2}{C_T}\) dx\\
\leq &\frac{C_T}{t^{n^2/2}}e^{\frac{(t^{-(2n-1)}\vee t^{-1})\delta^2}{C_T}}U\exp \( -\frac{ |\mathcal{T}_t^{-1}(\theta^{\omega}(t, z) )|^2}{C_T}\)\\
&+C_T\int_{\RR^n}\mathbf{1}_{\left\{ |\theta^{\omega}(t,z)-\mathcal{T}_t(y)|> \delta\right\} }|\nabla f(\theta^{\omega}(t,z)-\mathcal{T}_t(y))|^4\exp \( -\frac{ |y|^2}{C_T}\) dy,
\end{align*}
in the second part of the last inequality we perform change of variable  $x \to y=\mathcal{T}_t^{-1}(\theta^{\omega}(t, z) - x)$.

By Lemma \ref{lmma}, we have  that 
\begin{align*}
|\theta^{\omega}(t, z)|\geq (e^{-\kappa T}|z|-\kappa T)\mathbf{1}_{
\left\{|z|\geq e^{\kappa T}\kappa T\right\} }.
\end{align*}
This implies that 
\begin{align}\label{d11}
&\int_0^{\infty} \frac{C_T}{t^{n^2/2}}e^{\frac{(t^{-(2n-1)}\vee t^{-1})\delta^2}{C_T}}U \bigg[\sup_{|z|\geq \lambda}\exp \( -\frac{ |\mathcal{T}_t^{-1}(\theta^{\omega}(t, z) )|^2}{C_T}\)\bigg]\(\lambda^{4n-1+\ep} +\lambda^{n-1}\) d\lambda\nonumber\\
\leq &  \frac{C_T}{t^{n^2/2}}e^{\frac{(t^{-(2n-1)}\vee t^{-1})\delta^2}{C_T}}U\int_{ e^{\kappa T}\kappa T}^{\infty}\exp \( -\frac{ (t^{-2n+1}\wedge t^{-1})(e^{-\kappa T}\lambda-\kappa T)^2}{C_T}\) \nonumber\\
&\qquad\qquad \(\lambda^{4n-1+\ep} +\lambda^{n-1}\) d\lambda\nonumber\\
&+ \frac{C_T}{t^{n^2/2}}e^{\frac{(t^{-(2n-1)}\vee t^{-1})\delta^2}{C_T}}U\int_0^{e^{\kappa T}\kappa T} \(\lambda^{4n-1+\ep} +\lambda^{n-1}\) d\lambda\leq C,
\end{align}
for some $C$ depending on $n, C_T, t_0,T,\epsilon, \kappa$ and $U$. Similarly, on the set 
\[
\{|z|\geq \lambda \}\cap \{|\theta^{\omega}(t,z)-\mathcal{T}_t(y)|> \delta\},
\]
we can deduce that
\[
|\theta^{\omega}(t,z)-\mathcal{T}_t(y)|\geq \delta \vee (e^{-\kappa T}\lambda-\kappa T-|\mathcal{T}_t(y)|).
\]
Therefore, it follows from \eqref{nf2} that
\begin{align}\label{d12}
&\int_0^{\infty} \bigg[\sup_{|z|\geq \lambda}\int_{\RR^n}\mathbf{1}_{\left\{ |\theta^{\omega}(t,z)-\mathcal{T}_t(y)|> \delta\right\} }|\nabla f(\theta^{\omega}(t,z)-\mathcal{T}_t(y))|^4\exp \( -\frac{ |y|^2}{C_T}\) dy\bigg]\nonumber\\
&\qquad\qquad \(\lambda^{4n-1+\ep} +\lambda^{n-1}\) d\lambda\nonumber\\
\leq &  \int_{\RR^n}\int_0^{\infty} \bigg[\sup_{|\tilde{z}|\geq \delta \vee (e^{-\kappa T}\lambda-\kappa T-|\mathcal{T}_t(y)|)}|\nabla f(\tilde{z})|^4 \bigg]\(\lambda^{4n-1+\ep} +\lambda^{n-1}\) e^{ -\frac{ |y|^2}{C_T}}d\lambda dy\nonumber\\
\leq &\int_{\RR^n}\int_0^{\infty} \mathbf{1}_{\left\{e^{-\kappa T}\lambda-\kappa T-|\mathcal{T}_t(y)|<\delta\right\} }\bigg[\sup_{|\tilde{z}|\geq \delta}|\nabla f(\tilde{z})|^4 \bigg]\(\lambda^{4n-1+\ep} +\lambda^{n-1}\) e^{ -\frac{ |y|^2}{C_T}}d\lambda dy\nonumber\\
&+\int_{\RR^n}\int_0^{\infty} \mathbf{1}_{\left\{ e^{-\kappa T}\lambda-\kappa T-|\mathcal{T}_t(y)|\geq \delta\right\} }\bigg[\sup_{|\tilde{z}|\geq e^{-\kappa T}\lambda-\kappa T-|\mathcal{T}_t(y)|}|\nabla f(\tilde{z})|^4 \bigg]\nonumber\\
&\qquad\qquad  \(\lambda^{4n-1+\ep} +\lambda^{n-1}\) e^{ -\frac{ |y|^2}{C_T}}d\lambda dy\nonumber\\
\leq &\delta^{-1}U\int_{\RR^n}\int_0^{e^{\kappa T}(\delta+\kappa T+|\mathcal{T}_t(y)|) }  \(\lambda^{4n-1+\ep} +\lambda^{n-1}\)d\lambda e^{ -\frac{ |y|^2}{C_T}} dy+e^{(4n+\ep)\kappa T}\int_{\RR^n}\int_{0}^{\infty}e^{ -\frac{ |y|^2}{C_T}}\nonumber\\
&\quad \times \bigg[\sup_{|\tilde{z}|\geq \tau}|\nabla f(\tilde{z})|^4 \bigg]\(|\tau+\kappa T+|\mathcal{T}_t(y)||^{4n-1+\ep} +|\tau+\kappa T+|\mathcal{T}_t(y)||^{n-1}\) d\tau dy\leq C,
\end{align}
where $C>0$ depends on $n, C_T, t_0,T,\epsilon, \kappa$, $U$ and $\delta$.  Combining  \eqref{d1}, \eqref{d11} and \eqref{d12} we see that  $D_1$ is bounded. Using a similar argument, we can   prove that $D_2$ is bounded uniformly in $t\in [t_0, T]$ and $\omega\in C_b([0,T];\RR^n)$. The proof of this proposition is then completed.
\end{proof}
\begin{remark} The main difficulty in the above proof is to show the integrability over an unbounded  domain with respect to $\lambda$. After \eqref{nf2} we divided the integral domain   into $|x|\le \delta$ and $|x|>\delta$ is for simplicity because even if we use 
 $|x|\le \delta \sqrt t$ and $|x|>\delta \sqrt t$, we cannot get rid of the $t_0$ 
 in the statement \eqref{unbd}. 
\end{remark}

\section{Proof of the main results}\label{s.4}

Now, we are ready to  to prove Theorems \ref{SDEsu} and \ref{PDEthm}.  In the first subsection we shall prove the existence and uniqueness of the local solution to PDE \eqref{quan_pde} 
up to a small time $t_0$. 

\subsection{Local solution}
In this subsection, we prove a local version of Theorem \ref{PDEthm} (see Proposition \ref{lcPDEthm}). We shall use the fixed point theorem. 
First we need to bound the distance of quantiles by the distance of  distributions. 
The following lemma is known (see e.g.    \cite{kolokoltsov2013nonlinear}).  We rewrite a short proof   for the sake of completeness.
\begin{lemma}\label{lmmaqt}
Let $\alpha\in (0,1)^n$ and let $K, \delta, \varepsilon$ be positive constants. Denote by $\mathcal{S}$ the collection of density functions satisfying   \eqref{sets}. Then, for any $h_1,h_2\in \mathcal{S}$,
\begin{align}\label{ql1}
|Q_{\alpha}(h_1)-Q_{\alpha}(h_2)|\leq \sqrt{n}(2K)^{-(n-1)}\delta^{-1} |h_1 - h_2 |_{L^1}.
\end{align}
\end{lemma}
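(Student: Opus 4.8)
The plan is to bound each coordinate of $Q_\alpha(h_1)-Q_\alpha(h_2)$ by a one–dimensional argument and then reassemble via the Euclidean norm, the key ingredient being the uniform lower bound $\delta$ for the densities on the box $\{\max_i|x_i|\le K\}$. Fix $j\in\{1,\dots,n\}$ and, for $i=1,2$, introduce the $j$-th marginal cumulative distribution function $G_i(y):=\int_{\{x:x_j\le y\}}h_i(x)\,dx$. Since $h_i\in L^1(\RR^n)$, each $G_i$ is continuous and non-decreasing, so the $j$-th quantile $q_i:=(Q_\alpha(h_i))_j=\inf\{y:G_i(y)\ge\alpha_j\}$ satisfies $G_i(q_i)=\alpha_j$. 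I would also record that the tail bound in \eqref{sets} (always applied with $\varepsilon<\min_k\alpha_k$, as in Proposition \ref{3conditions}) yields $G_i(-K)\le\varepsilon<\alpha_j$, hence $q_i>-K$, while $Q_\alpha(h_i)\le K$ gives $q_i\le K$; thus both quantiles lie in $[-K,K]$.

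Assume without loss of generality $q_1\le q_2$. Using $G_1(q_1)=G_2(q_2)=\alpha_j$ together with the monotonicity of $G_2$, I would rearrange
\[
\int_{\{q_1<x_j\le q_2\}}h_2(x)\,dx=G_2(q_2)-G_2(q_1)=G_1(q_1)-G_2(q_1)=\int_{\{x_j\le q_1\}}\big(h_1(x)-h_2(x)\big)\,dx\le |h_1-h_2|_{L^1}.
\]
For the left-hand side, restrict the integral to the slab where in addition $|x_i|\le K$ for every $i\ne j$; because $q_1,q_2\in[-K,K]$ this slab is contained in $\{\max_i|x_i|\le K\}$, so there $h_2\ge\delta$, whence $\int_{\{q_1<x_j\le q_2\}}h_2(x)\,dx\ge\delta\,(q_2-q_1)(2K)^{n-1}$. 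Combining the two estimates gives $|q_1-q_2|\le (2K)^{-(n-1)}\delta^{-1}|h_1-h_2|_{L^1}$. Since $j$ was arbitrary, summing the squares over $j=1,\dots,n$ and taking the square root produces \eqref{ql1}.

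The only subtlety worth flagging is the book-keeping around the quantile: one must verify $G_i(q_i)=\alpha_j$ (continuity of the marginal CDF) and that both quantiles sit inside $[-K,K]$, since this is exactly what guarantees that the cross-section of the slab used in the lower-bound step is the full box of side $2K$ in each of the remaining $n-1$ coordinates. Everything else is elementary.
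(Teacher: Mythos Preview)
Your argument is correct and takes a genuinely different route from the paper's. The paper interpolates: it sets $h^\beta=\beta h_1+(1-\beta)h_2$, uses convexity of $\mathcal{S}$ to keep $h^\beta\in\mathcal{S}$, writes $\hat\omega_j(\beta)=Q_\alpha^j(h^\beta)$, differentiates the defining relation $\int_{-\infty}^{\hat\omega_j(\beta)}\int_{\RR^{n-1}}h^\beta=\alpha_j$ in $\beta$ to obtain an explicit formula for $\hat\omega_j'(\beta)$, and then integrates $\hat\omega_j'$ over $[0,1]$. The same marginal lower bound $(2K)^{n-1}\delta$ appears in the denominator of that formula, yielding the identical constant.

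What your direct CDF comparison buys is simplicity and robustness: you avoid invoking convexity of $\mathcal{S}$, and you sidestep the (unaddressed in the paper) question of whether $\beta\mapsto\hat\omega_j(\beta)$ is differentiable, which would normally require an implicit-function argument using strict positivity of the marginal density at the quantile. The paper's approach, on the other hand, makes the dependence on the interpolation parameter explicit and would generalize more readily if one wanted higher-order information about how the quantile varies along a path of densities. Your care in checking $q_i\in[-K,K]$ (via the tail bound with $\varepsilon<\min_k\alpha_k$, consistent with how $\mathcal{S}$ is actually constructed in Proposition~\ref{3conditions}) is also a point the paper handles somewhat implicitly.
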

\begin{proof}
Since $h_1,h_2\in \mathcal{S}$ where $\mathcal{S}$ is a convex set, we know that  for any $\beta \in (0,1)$,
\[
h^{\beta}:=\beta h_1+(1-\beta)h_2\in \mathcal{S}
\]
as well. Write $\hat{\omega}(\beta)=(\hat{\omega}_1(\beta),\dots,\hat{\omega}_n(\beta))= Q_{\alpha}(h^{\beta})$. 

By definition of the quantile, for any $j = 1, \cdots, n$,  we have 
\begin{align}\label{quane}
\int_{-\infty}^{\hat{\omega}_j(\beta)} dx_j \int_{\RR^{n-1}} h^{\beta}(x) \prod_{k\ne j}d x_k=\alpha_j.
\end{align}
Differentiating  both sides of \eqref{quane} with respect to $\beta$ yields
\[
\hat{\omega}_j'(\beta)\int_{\RR^{n-1}} h^{\beta}(x) \prod_{k\ne j}d x_k\bigg|_{x_j=\hat{\omega}_j(\beta)}+\int_{-\infty}^{\hat{\omega}_j(\beta)} dx_j \int_{\RR^{n-1}} \(h_1(x)-h_2(x)\) \prod_{k\ne j}d x_k=0\,. 
\]
Thus
\[
\hat{\omega}_j'(\beta) = -\bigg[\int_{\RR^{n-1}} h^{\beta}(x) \prod_{k\ne j}d x_k\bigg|_{x_j=\hat{\omega}_j(\beta)}\bigg]^{-1}\int_{-\infty}^{\hat{\omega}_j(\beta)} dx_j \int_{\RR^{n-1}} \(h_1(x)-h_2(x)\) \prod_{k\ne j}d x_k.
\]
It follows that
\begin{align}\label{useprop}
|Q^j_{\alpha}( h_1)  - Q^j_{\alpha}( h_2)| &= |\hat{\omega}_j(1) - \hat{\omega}_j(0)| =\bigg|\int_0^1 \hat{\omega}_j'(\beta) d\beta\bigg| \nonumber\\
&=\bigg| \int_0^1\frac{\int_{-\infty}^{\hat{\omega}_j(\beta)} dx_j \int_{\RR^{n-1}} \(h_1(x)-h_2(x)\) \prod_{k\ne j}d x_k}{\int_{\RR^{n-1}} h^{\beta}(x) \prod_{k\ne j}d x_k\big|_{x_j=\hat{\omega}_j(\beta)}} d\beta\bigg| \nonumber\\
&\le  |h_1 - h_2|_{L^1} \bigg| \int_0^1\bigg[\int_{\RR^{n-1}} h^{\beta}(x) \prod_{k\ne j}d x_k\bigg|_{x_j=\hat{\omega}_j(\beta)}\bigg]^{-1} d\beta\bigg|.
\end{align}
Recall that $h^{\beta}\in \mathcal{S}$. This implies that $\max_{1\leq j\leq n}|\hat{\omega}_j(\beta)|\leq K$, and thus by \eqref{sets}  we have 
\begin{align*}
\int_{\RR^{n-1}} h^{\beta}(x) \prod_{k\ne j}d x_k\bigg|_{x_j=\hat{\omega}_j(\beta)}\geq &\int_{[-K,K]^{n-1}} h^{\beta}(x) \prod_{k\ne j}d x_k\bigg|_{x_j=\hat{\omega}_j(\beta)}\\
\geq& \int_{[-K,K]^{n-1}} \delta\prod_{k\ne j}d x_k\geq (2K)^{n-1}\delta.
\end{align*}
As a consequence, we have 
\[
|Q^j_{\alpha}(h_1)  - Q^j_{\alpha}(h_2)| \le  (2K)^{-(n-1)}\delta^{-1} |h_1-h_2 |_{L^1},
\]
for all $j=1,\dots, n$, which yields the lemma.
\end{proof}

The next proposition describes the dependence of the solution of 
 \eqref{omega_pde} with respect to $\omega$.  It will be used  to bound the distance of 
 distributions  of the solutions to \eqref{simeq} by the quantiles.

\begin{proposition}\label{sensiofu}
	Let the hypotheses (H1)-(H5) be satisfied. Let $u^{(1)}=u^{\omega^{(1)}}$ and $u^{(2)}=u^{\omega^{(2)}}$ be the solutions to equation \eqref{omega_pde}  corresponding to  the continuous functions $\omega=\omega^{(1)}$ and $\omega=\omega^{(2)}$ respectively and with the same initial condition $f$ satisfying hypothesis (I). 
	 Then, the following inequality holds true 
	\begin{align}\label{sen1}
\sup_{s \in [0,t]} |u_s^{(1)} - u_s^{(2)} |_{L^1} & \le C_0\(t+ \sqrt{t}\)\sup_{s \in [0,t]}|\omega_s^{(1)} - \omega_s^{(2)}|,\quad \forall \ t\in [0, T]\,, 
	\end{align}
where $C_0 $ is a positive constant independent of $\omega^{(1)}$, $\omega^{(2)}$ and $t$.
\end{proposition}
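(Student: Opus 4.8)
The plan is to apply the time-inhomogeneous Feynman--Kac formula of Theorem \ref{thmfk} to both densities and to exploit that $u^{(1)}$ and $u^{(2)}$ issue from the \emph{same} initial datum $f$, so that the whole discrepancy is carried by $\omega^{(1)}-\omega^{(2)}$ through the coefficients. By the remark following Theorem \ref{thmfk} the representation is legitimate under (H1)--(H5), and it reads, for $i=1,2$,
\[
u_t^{(i)}(x)=\EE\big[f(X_t^{(i),t,x})\,e^{\Gamma_t^{(i)}}\big],\qquad \Gamma_t^{(i)}:=\int_0^t c\big(t-s,\omega^{(i)}_{t-s},X_s^{(i),t,x}\big)\,ds,
\]
where $X^{(i),t,x}$ solves \eqref{fk} with the coefficients of equation \eqref{omega_pde} evaluated along $\omega^{(i)}$ and started from $x$. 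Realizing $X^{(1),t,x}$ and $X^{(2),t,x}$ on the same Brownian motion, I would split
\begin{align*}
u_t^{(1)}(x)-u_t^{(2)}(x)&=\EE\big[(f(X_t^{(1),t,x})-f(X_t^{(2),t,x}))\,e^{\Gamma_t^{(1)}}\big]\\
&\quad+\EE\big[f(X_t^{(2),t,x})\,(e^{\Gamma_t^{(1)}}-e^{\Gamma_t^{(2)}})\big]=:A_t(x)+B_t(x).
\end{align*}

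\textbf{Step 1: pathwise stability in $\omega$.} As $\sigma$, the drift and $c$ are globally Lipschitz in the space variable and, by (H3)--(H4), also Lipschitz in the $\omega$-slot, a Burkholder--Davis--Gundy and Gr\"onwall argument — the one already used for $\nabla X^{\omega,t,x}$ in the proof of Proposition \ref{hypie} — yields, uniformly in $x\in\RR^n$, $t\in[0,T]$ and $\omega^{(1)},\omega^{(2)}$,
\[
\Big\|\sup_{0\le s\le t}|X_s^{(1),t,x}-X_s^{(2),t,x}|\Big\|_2\le C\sqrt t\;\sup_{0\le s\le t}|\omega^{(1)}_s-\omega^{(2)}_s|,
\]
and, using additionally that $c$ is bounded and Lipschitz, $\|\Gamma_t^{(1)}-\Gamma_t^{(2)}\|_2\le C\,t\,\sup_{s\le t}|\omega^{(1)}_s-\omega^{(2)}_s|$. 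Feeding these into the decomposition via Cauchy--Schwarz, $|e^{a}-e^{b}|\le e^{\|c\|_\infty T}|a-b|$ and the mean-value inequality $|f(y)-f(z)|\le|y-z|\sup_{\lambda\in[0,1]}|\nabla f(z+\lambda(y-z))|$ gives the pointwise bound
\[
|u_t^{(1)}(x)-u_t^{(2)}(x)|\le C(\sqrt t+t)\Big(\sup_{s\le t}|\omega^{(1)}_s-\omega^{(2)}_s|\Big)\,g_t(x),
\]
where $g_t(x):=\big\|\sup_{\lambda\in[0,1]}|\nabla f(X_t^{(2),t,x}+\lambda(X_t^{(1),t,x}-X_t^{(2),t,x}))|\big\|_2+\|f(X_t^{(2),t,x})\|_2$.

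\textbf{Step 2: integration in $x$.} It remains to prove $\sup_{t\in[0,T]}\sup_{\omega^{(1)},\omega^{(2)}}\int_{\RR^n}g_t(x)\,dx<\infty$; integrating the last display over $x$ and taking $\sup_{s\le t}$ then gives \eqref{sen1} with $C_0$ independent of $\omega^{(1)},\omega^{(2)}$ and $t$. For the $\|f(X_t^{(2),t,x})\|_2$ term we have $\EE|f(X_t^{(2),t,x})|^2=\int_{\RR^n}|f(y)|^2 p_t^{(2)}(y,x)\,dy$ (the identity used in the proof of Proposition \ref{hypie}), and the Gaussian upper bound of Theorem \ref{tse}, which is uniform in $\omega$ (as noted in the proof of Proposition \ref{epsilon}), turns this into a Gaussian convolution of $|f|^2$ centred at $\theta^{\omega^{(2)}}(t,x)$; the substitution $x\mapsto\theta^{\omega^{(2)}}(t,x)$ (Jacobian bounded above and below by Corollary \ref{coro}) followed by $y\mapsto\mathcal{T}_t^{-1}(\theta^{\omega^{(2)}}(t,x)-y)$ reduces the finiteness of $\int_{\RR^n}(\EE|f(X_t^{(2),t,x})|^2)^{1/2}dx$ — via the growth estimate of Lemma \ref{lmma} — to the weighted integrability of $f$ in hypothesis (I); this is exactly the kind of computation carried out for $D_1,D_2$ in the proof of Proposition \ref{hypie}. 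The $\nabla f$ term is treated the same way once the supremum over the segment is reduced: on the event $\{|X_t^{(1),t,x}-X_t^{(2),t,x}|\le\tfrac12|X_t^{(2),t,x}|\}$ the segment lies in $\{|z|\ge\tfrac12|X_t^{(2),t,x}|\}$, so the supremum is $\le\sup_{|z|\ge|X_t^{(2),t,x}|/2}|\nabla f(z)|$, while on the complementary (low-probability) event one applies Cauchy--Schwarz against a higher moment of $X_t^{(1),t,x}$ (finite by standard SDE moment bounds) and uses the continuity of $\nabla f$; both pieces then integrate in $x$ using Theorem \ref{tse}, Corollary \ref{coro} and the $\nabla f$-part of hypothesis (I).

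\textbf{Main obstacle.} The genuinely delicate point is the integration over all of $\RR^n$ in Step 2: the pointwise bound of Step 1 does not decay in $x$, so one must exploit the precise tail decay built into hypothesis (I) together with the Gaussian bounds of Theorem \ref{tse} and the change of variables of Corollary \ref{coro}, in the spirit of Proposition \ref{hypie}; the extra technical wrinkle, absent from that proof, is controlling $\sup_{\lambda\in[0,1]}|\nabla f(\cdot)|$ along the interpolating segment rather than at a single point. The remaining ingredients — the SDE stability estimates, the changes of variables, and the bookkeeping of the $\sqrt t+t$ factor — are routine.
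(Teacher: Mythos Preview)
Your overall strategy coincides with the paper's: Feynman--Kac representation, the same two-term split, the BDG/Gr\"onwall estimate $\|X^{(1),t,x}_s-X^{(2),t,x}_s\|_{2p}\le C(t^{1/2}+t)\sup_{r}|\omega^{(1)}_r-\omega^{(2)}_r|$, and then integration in $x$ using the Gaussian upper bound of Theorem \ref{tse} together with the change of variables from Corollary \ref{coro} and hypothesis (I). The $B_t$-term (the $e^{\Gamma^{(1)}}-e^{\Gamma^{(2)}}$ piece) and the integrability of $\|f(X_t^{(2),t,x})\|_2$ are handled essentially as you describe.

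The genuine gap is in your treatment of $A_t$, specifically the control of $\sup_{\lambda\in[0,1]}|\nabla f(X^{(2)}+\lambda(X^{(1)}-X^{(2)}))|$ on the ``bad'' event $\{|X^{(1)}-X^{(2)}|>\tfrac12|X^{(2)}|\}$. Your plan there---``Cauchy--Schwarz against a higher moment of $X_t^{(1),t,x}$''---does not yield the required decay in $x$: the moments of $X_t^{(1),t,x}$ grow like $(1+|x|)^{2p}$ and so cannot supply integrability over $\RR^n$; moreover the probability of the bad event depends on $\sup_s|\omega^{(1)}_s-\omega^{(2)}_s|$, which contaminates the linear estimate and would make $C_0$ depend on the $\omega^{(i)}$, contrary to the statement. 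The paper explicitly flags the failure of the straight-line mean value theorem (``we cannot guarantee $|\xi|\ge|x|\wedge|y|$, which is critical'') and circumvents it by an \emph{arc-path} mean value inequality: integrating $\nabla f$ first along the circular arc of radius $|x|\wedge|y|$ and then radially gives
\[
|f(x)-f(y)|\le 2\,|x-y|\sup_{|x|\wedge|y|\le|z|\le|x|\vee|y|}|\nabla f(z)|\le 2\,|x-y|\,g\big(|x|\wedge|y|\big),
\]
where $g(\lambda)=\sup_{|z|\ge\lambda}|\nabla f(z)|$. Since $g$ is nonincreasing, $g(|X^{(1)}|\wedge|X^{(2)}|)\le g(|X^{(1)}|)+g(|X^{(2)}|)$, and one is reduced to bounding $\int_{\RR^n}\|g(|X^{(i),t,x}_t|)\|_4\,dx$ for each $i$ \emph{separately}. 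That integral is then controlled exactly by your Step~2 mechanism (Theorem \ref{tse}, a weighted Cauchy--Schwarz, the change of variables of Corollary \ref{coro}, and the $\nabla f$-part of (I)), with constants independent of $\omega^{(1)},\omega^{(2)}$. This single trick eliminates the bad-event case entirely and is the missing idea in your proposal.
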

\begin{proof}
By the Feynman-Kac formula (Theorem \ref{thmfk}), for $i=1$ and $2$, we can write 
\begin{align*}
u^{(i)}_t(x) = \EE \(f(X_{t}^{(i),t,x})\exp\(\int_0^t c^{(i)} \(t- s, X_{s}^{(i),t,x}\) ds \)\),
\end{align*}
where  $X^{(i),t,x}=X^{\omega^{(i)},t,x}$ is the solution to \eqref{fk} with initial condition $X_0^{(i),t,x}=x$ and coefficients 
\begin{eqnarray*}
&&a^{(i)}(t-s,x)=a(t-s,\omega^{(i)}_{t-s},x),\ b^{(i)}(t-s,x)=b(t-s,\omega^{(i)}_{t-s},x),\\ &&c^{(i)}(t-s,x)=c(t-s,\omega^{(i)}_{t-s},x), 
\end{eqnarray*}
for all $t\in[0,T]$ and $x\in \RR^n$ with $a,b$ and $c$  being defined by
 \eqref{terma}-\eqref{termc} respectively. Thus, we have
\begin{align}\label{e3}
&\quad \int_{\RR^n}| u^{(1)}_t(x) -  u^{(2)}_t(x)|dx  \nonumber \\
=& \int_{\RR^n
} \EE \bigg[ f(X_{t}^{(1),t,x})\exp\(\int_0^tc^{(1)}\(t-s, X_{s}^{(1),t,x}\) ds \)  \nonumber\\
& - f(X_{t}^{(2),t,x})\exp\(\int_0^t c^{(2)}\(t-s, X_{s}^{(2),t,x}\)  ds \)  \bigg]dx  \nonumber \\
=& \int_{\RR^n
} \EE \bigg\{ f(X_{t}^{(1),t,x})\left[\exp\(\int_0^tc^{(1)}\(t-s, X_{s}^{(1),t,x}\) ds \) -\exp\(\int_0^t c^{(2)}\(t-s, X_{s}^{(2),t,x}\)  ds \)\right]  \bigg\}dx  \nonumber \\
&\quad+\int_{\RR^n
} \EE \bigg[ \(f(X_{t}^{(1),t,x}) -   f(X_{t}^{(2),t,x})\)\exp\(\int_0^t c^{(2)}\(t-s, X_{s}^{(2),t,x}\) ds \)  \bigg]dx  \nonumber \\
= &I_1+ I_2.
\end{align}
Due to hypothesis  (H4), we know that  the function $c$ is uniformly bounded on $[0,T]\times \RR^{n}\times \RR^n$ by $2\kappa$, and Lipschitz continuous. Then, the first term of \eqref{e3} is bounded by using the mean value theorem as follows: 
\begin{align}\label{e2}
I_1 =& \int_{\RR^n} \EE \bigg[ f(X_t^{(1),t,x}) \bigg(\exp\(\int_0^t c^{(1)}\(t-s, X_{s}^{(1),t,x}\) ds \)  \nonumber\\
&- \exp\(\int_0^t c^{(2)}\(t-s, X_{s}^{(2),t,x}\)  ds \)  \bigg) \bigg] dx  \nonumber\\
\le&  e^{2\kappa T} \int_{\RR^n} \EE \bigg[ f(X_t^{(1),t,x})  \( \int_0^t c^{(1)}\(t-s, X_{s}^{(1),t,x}\) ds -  \int_0^t c^{(2)}\(t-s, X_{s}^{(2),t,x}\)  ds \)  \bigg] dx  \nonumber\\
=&e^{2\kappa T}  \int_{\RR^n} \EE \bigg[ f(X_t^{(1),t,x})  \( \int_0^tc \(t-s,\omega^{(1)}_{t-s}, X_{s}^{(1),t,x}\) -   c \(t-s,\omega^{(1)}_{t-s}, X_{s}^{(2),t,x}\) ds \)  \bigg] dx  \nonumber\\
&+ e^{2\kappa T}  \int_{\RR^n} \EE \bigg[ f(X_t^{(1),t,x})  \( \int_0^tc \(t-s,\omega^{(1)}_{t-s}, X_{s}^{(2),t,x}\) -   c \(t-s,\omega^{(2)}_{t-s}, X_{s}^{(2),t,x}\) ds \)  \bigg] dx  \nonumber\\
\le& c_{\kappa,T} \int_0^t |\omega^{(1)}_{t-s} - \omega^{(2)}_{t-s}|ds\int_{\RR^n}  \EE [f(X_t^{(1),t,x})] dx \nonumber\\
& +  c_{\kappa,T} \int_{\RR^n}  \EE \bigg[f(X_t^{(1),t,x}) \int_0^t |X_{s}^{(1),t,x} - X_{s}^{(2),t,x}|ds \bigg] dx   \nonumber \\
=& c_{\kappa,T}  \(I_{11} + I_{12}\),
\end{align}
where $c_{\kappa,T}$ is a positive constant depending on $\kappa$ and $T$. For $i=1,2$, denote by $p^{(i)}_t(\cdot,x)$ the probability density of $X^{(i)}_t$ and write $\theta^{(i)}=\theta^{\omega^{(i)}}$ the solution to \eqref{determinF} with $\omega=\omega^{(i)}$.  Then, by Theorem \ref{tse} and Corollary \ref{coro}, we have 
\begin{align}\label{intp}
\int_{\RR^n}\EE [f(X_t^{(1),t,x})] dx&=\int_{\RR^{2n}}  f(y) p^{(1)}_t(y,x) dy dx \nonumber\\
&\le C_T\int_{\RR^{2n}}  f(y)  t^{-n^2/2} \exp \( - C_T^{-1} \sum_{i=1}^{n}\(\frac{\theta^{(1)}_i(t, x) - y_i}{t^{i-\frac{1}{2}}}\)^2\) dxdy \nonumber \\
& \le C_T\int_{\RR^{2n}}  f(y) \exp \( - C_T^{-1} |z|^2\)  \det \(\nabla (\theta^{(1)})^{-1}\(t, y-\mathcal{T}_t(z)\)\)dzdy \nonumber\\
&\leq  C_Te^{n\kappa T}\int_{\RR^n}  f(y)dy\int_{\RR^n} \exp \( - C_T^{-1} |z|^2\) dz \nonumber\\
& \le  C_Te^{n\kappa T} (C_T\pi)^{\frac{n}{2}}.
\end{align}
Hence,
\begin{align}\label{I11}
I_{11} \le&  C_1t\sup_{s \in [0,t]} |\omega_s^{(1)} - \omega_s^{(2)}|,
\end{align}
for some positive constant $C_1$ independent of $\omega^{(1)}$, $\omega^{(2)}$ and $t$. On the other hand, for any $p\geq 1$, we can deduce that, for some constant $c_{n,p}>0$ depending on $n$ and $p$,
\begin{align}\label{e1}
&\quad \EE \big| X_{t}^{(1),t,x} - X_{t}^{(2),t,x} \big|^{2p}  \nonumber\\
& \le c_{n,p}\bigg[\sum_{i=1}^{n} \EE \(\int_0^t\(F_i(t-s,\omega_{t-s}^{(1)},X^{(1),t,x}_s)  - F_i(t-s,\omega_{t-s}^{(2)}, X^{(2),t,x}_s)\)ds\)^{2p} \nonumber\\
&\quad+ \EE \(\int_0^t \(\sigma(t-s,\omega_{t-s}^{(1)}, X^{(1),t,x}_s)  - \sigma(t-s,\omega_{t-s}^{(2)}, X^{(2),t,x}_s) \)dW_s\)^{2p} \bigg].
\end{align}
By hypothesis  (H1) and the Burkholder-Davis-Gundy and Jensen's inequalities, we have
\begin{align*}
&\quad \EE \big| X_{t}^{(1),t,x} - X_{t}^{(2),t,x} \big|^{2p} \\
& \le c_{n,p}\kappa\bigg[ t^{2p-1}\(\int_0^t |\omega_{t-s}^{(1)} - \omega_{t-s}^{(2)}|^{2p}ds + \int_0^t \EE |X^{(1),t,x}_s -  X^{(2),t,x}_s|^{2p}ds \)\\
&\quad + \EE \(\int_0^t \(|\omega_{t-s}^{(1)} - \omega_{t-s}^{(2)}|+ |X^{(1),t,x}_s -  X^{(2),t,x}_s| \)^2ds\)^p \bigg] \\ 
&\le c_{n,p,\kappa}\bigg[ t^{2p-1}\(\int_0^t |\omega_{t-s}^{(1)} - \omega_{t-s}^{(2)}|^{2p}ds + \int_0^t \EE |X^{(1),t,x}_s -  X^{(2),t,x}_s|^{2p}ds\) \\
&\quad + t^{p-1}\int_0^t |\omega_{t-s}^{(1)} - \omega_{t-s}^{(2)}|^{2p}ds+ t^{p-1}\int_0^t \EE|X^{(1),t,x}_s -  X^{(2),t,x}_s|^{2p} ds \bigg]\\
& \le  c_{n,p,\kappa} (t^p+t^{2p})\sup_{s \in [0,t]} |\omega_{t-s}^{(1)} - \omega_{t-s}^{(2)}|^{2p}+  c_{n,p,\kappa} (t^{2p-1}+t^{p-1})\int_0^t \EE | X_{s}^{(1),t,x} - X_{s}^{(2),t,x}|^{2p} ds.
\end{align*}
An application of Gronwall's inequality yields  that
\begin{align}\label{estX}
\EE\big| X_{t}^{(1),t,x} - X_{t}^{(2),t,x} \big|^{2p}  \le
 c_{n,p,\kappa}(t^p+t^{2p}) e^{c_{n,p,\kappa} (T^{2p}+T^{p})}\sup_{s \in [0,t]}|\omega_s^{(1)} - \omega_s^{(2)}|^{2p}.
\end{align}
By Fubini's theorem,  H\"{o}lder's inequality and \eqref{estX}, we get that
\begin{align}\label{t1}
I_{12}& = \int_0^t\int_{\RR^n}  \EE \big[f(X_t^{(1),t,x})  |X_{s}^{(1),t,x} - X_{s}^{(2),t,x}| \big] dxds \nonumber\\
& \le  \int_0^t\int_{\RR^n}  \| f(X_t^{(1),t,x})\|_2 \big\| |X_{s}^{(1),t,x} - X_{s}^{(2),t,x}|\big\|_2 dxds\nonumber\\
&\le  c_{n,p,T}\sup_{s \in [0,t]}|\omega_s^{(1)} - \omega_s^{(2)}|\int_0^t\int_{\RR^n}  \| f(X_t^{(1),t,x})\|_2 dxds,
\end{align}
for some positive constant $c_{n,p,T}$ depending on $n,p$ and $T$. Notice that by Theorem \ref{tse}, Corollary \ref{coro} and Cauchy-Schwarz's inequality, we can deduce that 
\begin{align*}
\int_{\RR^n} \| &f(X_t^{(1),t,x}) \|_2 dx =\int_{\RR^n} \(\int_{\RR^n} f(y)^2 p^{(1)}_t(y,x)dy\)^{\frac{1}{2}}dx\nonumber\\
\leq&\frac{\sqrt{C_T}}{t^{n^2/4}}\int_{\RR^n} \(\int_{\RR^n} |f(y)|^2 \exp \( C_T^{-1} |\mathcal{T}_t^{-1}(\theta^{(1)}(t, x) -  y)|^2\) dy \)^{\frac{1}{2}}dx\nonumber\\ 
\le& \frac{\sqrt{C_T}}{t^{n^2/4}}\(\int_{\RR^{2n}}   |f(y)|^2 \exp \( C_T^{-1} |\mathcal{T}_t^{-1}(\theta^{(1)}(t, x) -  y)|^2\)\(|\theta^{(1)}(t, x)|^{\frac{n+\ep}{2}}\vee 1\)^2dydx\)^{\frac{1}{2}}  \nonumber \\
& \times \(\int_{\RR^n} \(|\theta^{(1)}(t, h)|^{\frac{n+\ep}{2}}\vee 1\)^{-2}dh\)^{\frac{1}{2}}.
\end{align*}
By changing of variables $x\to z=\mathcal{T}_t^{-1}(\theta^{(1)}(t, x) -  y)$ and $h\to l = \theta^{(1)}(t, h)$, we can write
\begin{align}\label{r1com1}
&\int_{\RR^n} \| f(X_t^{(1),t,x}) \|_2 dx\nonumber\\
\le & \sqrt{C_T}\bigg[\int_{\RR^{2n}} \det \(\nabla \(\theta^{(1)}\)^{-1}\(t,y+\mathcal{T}_t(z)\)\)  |f(y)|^2 e^{ -\frac{ |z|^2}{C_T}}\(|\mathcal{T}_tz+y|^{n+\ep}\vee 1\)dzdy\bigg]^{\frac{1}{2}}\nonumber\\
&\times\left[\int_{\RR^n} \det \(\nabla\( \theta^{(1)}\)^{-1}\(t,l\)\) \(|l|^{-(n+\ep)}\vee 1\)dl\right]^{\frac{1}{2}} \nonumber\\
\le& c_{n,\epsilon,T}\sqrt{C_T}e^{n\kappa T}\(\int_{\RR^{2n}}   f(y)^2  \exp \( -C_T^{-1} |z|^2\) \(|z|^{n+\ep}+|y|^{n+\ep}+ 1\)  dzdy\)^{\frac{1}{2}}\nonumber\\
&\times\left[\int_{\RR^n}  \(|l|^{-(n+\ep)}\vee 1\)dl\right]^{\frac{1}{2}}.
\end{align}
Recall that $f>0$ is a probability density satisfying hypothesis (I).  \eqref{r1com1} tells us that 
\begin{align}\label{r1com}
\int_{\RR^n} &\| f(X_t^{(1),t,x}) \|_2 dx\leq C,\quad \forall t\in [0,T],
\end{align}
where $C>0$ depends on $C_T, n, p, \kappa,\epsilon, T$ and $U$. Combining inequalities \eqref{t1} and \eqref{r1com}, we finally obtain
\begin{align}\label{I12}
I_{12} \le C_1t\sup_{s \in [0,t]}|\omega_s^{(1)} - \omega_s^{(2)}|,
\end{align}
for some $C_1$ independent of $\omega^{(1)}, \omega^{(2)}$ and $t$. 

In the next step, we estimate  the  term $I_2$ in \eqref{e3}. By Cauchy-Schwarz's inequality and the fact that $c$ is uniformly bounded, we can write
\begin{align}\label{inofI2}
I_2 &\le  \int_{\RR^n} \|f(X_{t}^{(1),t,x}) - f(X_{t}^{(2),t,x})\|_2 \bigg\|\exp\(\int_0^tc^{(2)}\(t-s, X_{s}^{(2),t,x}\) ds\) \bigg\|_2 dx \nonumber\\
&\le e^{2\kappa T} \int_{\RR^n} \|f(X_{t}^{(1),t,x}) - f(X_{t}^{(2),t,x})\|_2 dx.
\end{align}
To bound the above integral, we first claim  the following 
 version of mean value theorem. 	For any $x,y\in \RR^n$, the following inequality holds true: 
	\begin{align}\label{fxy}
	|f(x)-f(y)|\leq 2 \sup_{|x|\wedge |y|\leq |z|\leq |x|\vee |y|}|\nabla f(z)||x-y|.
	\end{align}
In fact,  consider   a plane $\mathcal{P}$ such that $0, x, y\in \mathcal{P}$. Without loss of  generality, suppose that $|x|\leq |y|$. Let $x'$ be the intersection of the straight line connecting $0$ and $y$, and the circle $\mathcal{O}$ centered at $0$ with radius $|x|$. Applying the fundamental theorem of calculus  to the path integral of $\nabla f$ along the (shorter) arc $x\to x'$ on $\mathcal{O}$, and then along the straight line $x'\to y$, 
we obtain  immediately,
	\begin{align} \label{path}
	|f(x)-f(y)|\leq \sup_{|x|\leq |z|\leq |y|}|\nabla f(z)|(|\wideparen{xx'}|+|y-x'|) 
	\end{align}
where $\wideparen{xx'}$ denotes the arc length.  Since the angle between the ray 
$x'y$ and the line $x'x$ is greater than or equal to $\pi/2$, we see that both
$\wideparen{xx'}$ and $|y-x|$ are less than or equal to $|y-x|$. Thus, 
inequality \eqref{fxy} follows immediately  from \eqref{path}. 
It is worth noticing that we do not apply the mean value theorem on the straight line $x\to y$. Since if so, we have $|f(x)-f(y)|\leq |\nabla f(\xi)| |x-y|$, where the point $\xi
=t_0x+(1-t_0)y$  for some $t_0\in [0, 1]$.  We can have $|\xi|\le |x|\vee |y|$. However,
we cannot guarantee $|\xi|\geq |x|\wedge |y|$, which is critical in the following 
immediate application.

	Using   \eqref{fxy} and Cauchy-Schwarz's inequality, we can write
	\begin{align}\label{I2term1}
	\|f(X_{t}^{(1),t,x}) - f(X_{t}^{(2),t,x})\|_2 \leq \big\|g\big(|X^{(1),t,x}_t|\wedge |X^{(2),t,x}_t|\big)\big\|_4\big\|X^{(1),t,x}_t-X^{(2),t,x}_t\big\|_4,
	\end{align}
	where $g:\RR_+\to \RR$ is given by
	\[
	g(\lambda) :=\sup \{|\nabla f(z)|: |z|\geq \lambda\},\quad \forall \lambda\geq 0.
	\]
Notice that $g(\lambda_1\wedge \lambda_2)\leq g(\lambda_1)+g(\lambda_2)$ for all $\lambda_1,\lambda_2\geq 0$. It follows that  
\begin{align}
	&\int_{\RR^n}\big\|g\big(X^{(1),t,x}_t\wedge X^{(2),t,x}_t\big)\big\|_4dx\leq \int_{\RR^n}\big\|g\big(X^{(1),t,x}_t \big)\big\|_4dx+\int_{\RR^n}\big\|g\big(X^{(2),t,x}_t \big)\big\|_4dx.
\end{align}
Therefore, proceeding  with a similar argument to  that in \eqref{r1com1} and \eqref{r1com} and recalling hypothesis (I), we   deduce that
\begin{align}\label{g12}
&\int_{\RR^n} \| g(X_t^{(1),t,x}) \|_4 dx\nonumber\\
\le& \frac{\sqrt{C_T}}{t^{n^2/4}}\(\int_{\RR^{2n}}   |g(|y|)|^4 \exp \( C_T^{-1} |\mathcal{T}_t^{-1}(\theta^{(1)}(t, x) -  y)|^2\)\(|\theta^{(1)}(t, x)|^{\frac{3(n+\ep)}{4}}\vee 1\)^4dydx\)^{\frac{1}{4}}  \nonumber \\
& \times \(\int_{\RR^n} \(|\theta^{(1)}(t, x)|^{\frac{3(n+\ep)}{4}}\vee 1\)^{-\frac{4}{3}}dx\)^{\frac{3}{4}} \nonumber\\
\le& c_{n,\epsilon}\sqrt{C_T}e^{n\kappa T}\(\int_{\RR^{2n}}   |g(|y|)|^4  \exp \( -C_T^{-1} |z|^2\) \(|z|^{3(n+\ep)}+|y|^{3(n+\ep)}+ 1\)  dzdy\)^{\frac{1}{2}}\nonumber\\
&\times\left[\int_{\RR^n}  \(|z|^{-(n+\ep)}\wedge 1\) dz\right]^{\frac{1}{2}}\leq C,
\end{align}
for some constant $C>0$ depends on $C_T, n, p, \kappa,\epsilon$ and $U$. Combining  inequalities \eqref{estX}, \eqref{inofI2}, \eqref{I2term1} - \eqref{g12}, we get
\begin{align}\label{I2}
I_2 &\le C_2\sqrt{t+t^2}\sup_{s \in [0,t]}|\omega_s^{(1)} - \omega_s^{(2)}|.
\end{align}
Therefore, inequality \eqref{sen1} follows by inserting  inequalities \eqref{I11}, \eqref{I12} and \eqref{I2} into \eqref{e3}.
\end{proof}
\begin{remark}
In formulation \eqref{inofI2}, the function $c^{(2)}(t-s, X_s^{(2),t,x})$ is bounded because of the hypothesis  (H4). This means that the integrability in $x$ has to be guaranteed by that of the term $\|f(X_{t}^{(1),t,x}) - f(X_{t}^{(2),t,x})\|_2$. This is the reason that we assume  the integrability   hypothesis (I)  on $\nabla f$.
\end{remark}
%
%

\begin{proposition}\label{lcPDEthm}
	Assume the  conditions  in Theorem \ref{PDEthm}. Then, there exists $t_0>0$  such that  \eqref{quan_pde} with initial condition $f$ has a   unique solution on the interval $[0, t_0]$.
\end{proposition}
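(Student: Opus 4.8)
The plan is to realise the solution of \eqref{quan_pde} as a fixed point of the map sending a candidate quantile curve $\omega$ to the quantile curve of the solution $u^{\omega}$ of the auxiliary equation \eqref{omega_pde}. Fix $\alpha\in(0,1)^n$ and let $K,\delta,\varepsilon>0$ be the constants produced by Proposition \ref{3conditions}, so that $u^{\omega}_t$ lies in the convex set $\mathcal{S}=\mathcal{S}_{\alpha,K,\delta,\varepsilon}$ for every $t\in[0,T]$ and every continuous $\omega:[0,T]\to\RR^n$; in particular, by \eqref{con2}, $\max_{1\le j\le n}|(Q_{\alpha}(u^{\omega}_t))_j|\le K$. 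For $t_0\in(0,T]$ to be chosen later, take the Banach space $\mathbb{B}=C([0,t_0];\RR^n)$ with the supremum norm and define $\mathcal{M}:\mathbb{B}\to\mathbb{B}$ by $(\mathcal{M}\omega)_t=Q_{\alpha}(u^{\omega}_t)$. Note that every $\omega\in\mathbb{B}$ is bounded on the compact interval $[0,t_0]$, so by (H1)--(H5) equation \eqref{auxiliaryeq} (hence \eqref{omega_pde}) has a unique solution possessing a density, and the Feynman--Kac representation of Theorem \ref{thmfk} applies.

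First I would check that $\mathcal{M}$ is a well-defined self-map of $\mathbb{B}$. Boundedness of $\mathcal{M}\omega$ is immediate from \eqref{con2}. For the time continuity of $t\mapsto Q_{\alpha}(u^{\omega}_t)$, note that all the $u^{\omega}_t$ belong to the same convex set $\mathcal{S}$, so Lemma \ref{lmmaqt} gives $|Q_{\alpha}(u^{\omega}_t)-Q_{\alpha}(u^{\omega}_s)|\le\sqrt{n}(2K)^{-(n-1)}\delta^{-1}|u^{\omega}_t-u^{\omega}_s|_{L^1}$; the $L^1$-continuity of $t\mapsto u^{\omega}_t$ follows from the representation $u^{\omega}_t(x)=\EE[f(X^{\omega,t,x}_t)\exp(\int_0^t c(t-s,\omega_{t-s},X^{\omega,t,x}_s)\,ds)]$, using continuity of the flow $X^{\omega,t,x}$ in $t$ together with the uniform integrability in $x$ supplied by the Gaussian upper bound of Theorem \ref{tse} and hypothesis (I) --- the same kind of estimate already performed in \eqref{intp} and \eqref{r1com}.

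Next I would show that $\mathcal{M}$ is a contraction once $t_0$ is small. Given $\omega^{(1)},\omega^{(2)}\in\mathbb{B}$, both $u^{(1)}_t:=u^{\omega^{(1)}}_t$ and $u^{(2)}_t:=u^{\omega^{(2)}}_t$ lie in $\mathcal{S}$, so Lemma \ref{lmmaqt} and then Proposition \ref{sensiofu} yield, for every $t\le t_0$,
\begin{align*}
|(\mathcal{M}\omega^{(1)})_t-(\mathcal{M}\omega^{(2)})_t|
&\le\sqrt{n}(2K)^{-(n-1)}\delta^{-1}\,|u^{(1)}_t-u^{(2)}_t|_{L^1}\\
&\le\sqrt{n}(2K)^{-(n-1)}\delta^{-1}C_0\,(t_0+\sqrt{t_0})\,\sup_{s\le t_0}|\omega^{(1)}_s-\omega^{(2)}_s|.
\end{align*}
Taking the supremum over $t\le t_0$ and choosing $t_0$ so small that $\sqrt{n}(2K)^{-(n-1)}\delta^{-1}C_0(t_0+\sqrt{t_0})<1$ makes $\mathcal{M}$ a contraction on $\mathbb{B}$. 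By the Banach fixed-point theorem there is a unique $\omega^{*}\in\mathbb{B}$ with $\mathcal{M}\omega^{*}=\omega^{*}$, and then $u:=u^{\omega^{*}}$ solves \eqref{omega_pde} with $\omega^{*}_t=Q_{\alpha}(u_t)$, i.e. $u$ solves \eqref{quan_pde} on $[0,t_0]$ with $u_0=f$. For uniqueness among all solutions of \eqref{quan_pde} (those with $t\mapsto Q_{\alpha}(u_t)$ continuous): such a $u$ is also the solution of \eqref{omega_pde} with $\omega_t=Q_{\alpha}(u_t)$, so Proposition \ref{3conditions} places $\omega$ in $\mathbb{B}$ and makes it a fixed point of $\mathcal{M}$, whence $\omega=\omega^{*}$ and $u=u^{\omega^{*}}$.

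Since the analytically heavy ingredients --- the tail estimate (Proposition \ref{epsilon}), the uniform-in-$\omega$ lower bound (Proposition \ref{plower}), the quantile Lipschitz bound (Lemma \ref{lmmaqt}) and above all the stability estimate (Proposition \ref{sensiofu}) --- are already established, the remaining work is mostly assembly. I expect the only point requiring genuine (though routine) care to be the self-map step: verifying the $L^1$-in-time continuity of $u^{\omega}$ and that the constants $K,\delta,\varepsilon$ in Proposition \ref{3conditions} can be taken uniform in $\omega$, which is precisely what that proposition provides.
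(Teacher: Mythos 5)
Your proposal is correct and follows essentially the same route as the paper: a Banach fixed-point argument whose contraction constant $C_0\sqrt{n}(2K)^{1-n}\delta^{-1}(t_0+\sqrt{t_0})$ comes from composing Lemma \ref{lmmaqt} with Proposition \ref{sensiofu}, with the self-map and uniqueness steps handled exactly as in the paper. The only (immaterial) difference is that you contract on $C([0,t_0];\RR^n)$ alone, whereas the paper contracts on the graph space of pairs $(\omega,u^{\omega})$; since $u^{\omega}$ is determined by $\omega$, the two formulations are equivalent.
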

\begin{proof} For any continuous function 
$\omega\in C([0,T], \RR^n)$ by a similar argument to   that in Proposition \ref{sensiofu}, we have that 
\[
\lim_{s\to t} |u^{\omega}_t-u^{\omega}_s|_{L_1}=0.
\]
Then, it follow from \eqref{ql1} that 
\[
\lim_{s\to t}|Q_{\alpha}(u^{\omega}_t)-Q_{\alpha}(u^{\omega}_s)|\leq \lim_{s\to t}\sqrt{n}K^{1-n}\delta^{-1} |u^{\omega}_t - u^{\omega}_s |_{L^1}=0.
\]
In other words, $Q_{\alpha}(u^{\omega}_t)$ is a continuous function in $t$.
  
 We shall use the fix point theorem to prove the proposition.
 Fix a $t_0>0$ satisfying the condition   given by 
\eqref{e.4.25} below.    Let  
$C([0, t_0], \RR^n)$ be the Banach space of all continuous functions with the sup norm. For any $\omega\in C([0, t_0], \RR^n)$, let $u^{\omega}: [0, t_0]\times \RR^n$ be the (unique) solution to \eqref{omega_pde} associated with $\omega$. 
Define 
\begin{align}\label{Banach}
\BB=\left\{ (\om, u^{\omega}), \om \in  C([0, t_0], \RR^n)\right\}
\subseteq  C([0, t_0], \RR^n)\oplus C([0, t_0], L_1(\RR^d))  
\end{align}
with the norm
\begin{align}\label{norm}
\|(\om, u^{\omega} )\|_\BB=\sup_{0\le t\le t_0} |\om(t)|+\sup_{0\le t\le t_0}  
|u_t^{\omega}|_{L_1}\,. 
\end{align}
We claim that 
$\BB$ is a closed set of $C([0, t_0], \RR^n)\oplus C([0, t_0], L_1(\RR^d))$. 
In fact,  if $(\om^{(n)}, u^{\omega^{(n)}})\in \BB$ converges to 
$(\om, v)\in  C([0, t_0], \RR^n)\oplus C([0, t_0], L_1(\RR^d)) $, 
then $\om^{(n)}\rightarrow \om$ in $C([0, t_0], \RR^n)$ and 
$u^{\omega^{(n)}}\rightarrow v$ in $C([0, t_0], L_1(\RR^d))$.  
Thus, $\om \in C([0, t_0], \RR^n)$. Solving   \eqref{omega_pde} associated with $\omega$, we  obtain $u^{\omega} \in C([0, t_0], L_1(\RR^d))$.  
By \eqref{sen1}, we know that  $ u^{\omega^{(n)}}\rightarrow 
u^{\omega}$ in $ C([0, t_0], L_1(\RR^d))$.  This implies that $v=u^{\omega}$.  
In other word, $\BB$ is   closed  and hence it is also a Banach space.

Fix $\alpha=(\alpha_1,\cdots,\alpha_n) \in \RR^n$. Let $K,\delta,\varepsilon>0$ be  defined in \eqref{con1}-\eqref{con3}.  
Now, we define a mapping $\cM: \BB\rightarrow \BB$ as follows
\begin{equation}
\cM(\om, u^{\omega})=(\cM_1(\om, u^{\omega}), \cM_2(\om, u^{\omega}))
\,, \label{e.4.22} 
\end{equation}
where  $(\om, u^{\omega})\in \BB$   and  
\[
\begin{cases}
\cM_1(\om, u^{\omega})=Q_\al(u_\cdot^{\omega})\,,   \\
\cM_2(\om, u^{\omega})=u^{Q_\al(u_\cdot^{\omega})}\,. 
\end{cases}
\] 
Let $\omega^{(1)}$ and $\omega^{(2)}$ be continuous functions on $[0,t_0]$ with values in $\RR^n$, and let $u^{(1)}$ and $u^{(2)}$ be the solutions  to equation \eqref{simeq}  
associated with  $\omega=\omega^{(1)}$ and $\omega=\omega^{(2)}$ respectively,
and with the same initial condition $f$.
Lemma \ref{lmmaqt} and Proposition \ref{sensiofu} imply  that
\begin{align}
\sup_{0\le t \le t_0} |Q_{\alpha}(u_t^{\omega^{(1)}})  - Q_{\alpha}(u_t^{\omega^{(2)}})|\le &C_0\sqrt{n}(2K)^{1-n}\delta^{-1} \(t_0+ \sqrt{t_0}\)\sup_{t\in[0,t_0]}|\omega_t^{(1)} - \omega_t^{(2)}| \label{e.4.23} 
\end{align} 
and 
\begin{eqnarray}
\sup_{0\le t \le t_0}
 |u_t^{Q_\al(u_\cdot^{\omega^{(1)}})}-u_t^{Q_\al(u_\cdot^{\omega^{(2)}})}|_{L_1}
 &\le& C_0\(t_0+ \sqrt{t_0}\)\sup_{s \in [0,t_0]}|Q_\al(u_s^{\omega^{(1)}}) - Q_\al(u_s^{\omega^{(2)}})|\nonumber\\
 &\le& C_0\sqrt{n}(2K)^{1-n}\delta^{-1}  \(t_0+ \sqrt{t_0}\)
 \sup_{t \in [0,t_0]} | u_t^{\omega^{(1)}} - u_t^{\omega^{(2)}} |_{L_1} \,. \nonumber\\
 \label{e.4.24} 
\end{eqnarray}
Choose $t_0>0$ such that 
\begin{equation}
C_0\sqrt{n}(2K)^{1-n}\delta^{-1} \(t_0+ \sqrt{t_0}\)=L < 1\,. 
\label{e.4.25} 
\end{equation}  
 
Then,  from \eqref{e.4.23}-\eqref{e.4.24} 
it follows that the  mapping $\mathcal{M}$ defined by \eqref{e.4.22} 
 is a contraction map on  $\BB$.
It has then a fixed point 
$(\om, u^{\omega})\in \BB$.  By our construction, we see that 
$u^{\omega}$ satisfies \eqref{omega_pde} with 
$\om=Q_{\alpha}(u_t^{\omega })$.  This means that 
$u=u^{\omega}$ satisfies  \eqref{quan_pde}. 


To show the uniqueness, we assume   $v$  is another 
  solution to \eqref{quan_pde}. Letting $\omega' =Q_{\alpha}(v)=\{Q_{\alpha}(v_s)|s\in [0,t_0]\}$, replacing  $Q_{\alpha}(v)$  by $\omega'$ in \eqref{quan_pde}, we see that $v$ is also a solution of \eqref{omega_pde} with $\omega'$. Thus, 
  $(\om' , v)$ is a fixed point of $\cM$. 
By  the uniqueness of the fixed point of map $\cM$, we complete the proof of 
 the 
proposition. 
\end{proof} 

\subsection{Global solution and proof of main result}
In the previous subsection, we proved that   \eqref{quan_pde} has a unique solution $u$ on $[0,t_0]$ when  $t_0$ is small enough. A natural question is  whether this solution can be uniquely extended to any time interval.
 A positive answer is given in this subsection by using Proposition \ref{hypie}.
\begin{proof}[Proof of Theorem \ref{PDEthm}]
By Proposition \ref{lcPDEthm}, there exists $t_0$, such that \eqref{quan_pde} has a unique solution on $[0,t_0]$. Consider $\eqref{quan_pde}$ with $t\ge t_0$ and with 
initial condition $f=u_{t_0}$. Proposition \ref{hypie} can be applied to obtain 
 that there exists $t_1>0$ depending on the initial condition $f=u_{t_0}$ only 
 through 
 $U'$ given  by  \eqref{unbd}  such that 
  equation \eqref{quan_pde} has a unique solution on $[t_0,t_0+t_1]$. Notice that $U'$ is independent of $t\in [t_0,T]$. This allows us to extend the solution of \eqref{quan_pde} repeatedly to the interval $[0, t_0+nt_1]$ until time $t_0+nt_1\ge T$. In other words, \eqref{quan_pde} has a unique solution on the whole  time interval $[0,T]$.
\end{proof}
\begin{proof}[Proof of Theorem \ref{SDEsu}]
Under the hypotheses (H1)-(H5) and (I), Theorem \ref{PDEthm} implies the weak existence and uniqueness to SDE \eqref{simeq}.  Beacause of the weak uniqueness, the $\alpha$-quantile of any weak solution to SDE \eqref{simeq} is the same function on $[0,T]$. Therefore, the strong existence and uniqueness is a straightforward result of Theorem \ref{strongsol}.
\end{proof}


\end{document}